\theoremstyle{plain}
 \newtheorem{thm}{Theorem}[section]
 \newtheorem{cor}[thm]{Corollary}
 \newtheorem{lem}[thm]{Lemma}
 \newtheorem{prop}[thm]{Proposition}
\theoremstyle{definition}
 \newtheorem{ex}{Example}[section]
\theoremstyle{remark}
\begin{document}
\title[On the fixed point spaces of some completely positive maps]
{On the fixed point spaces of some completely positive maps}

\author[Tomohiro Hayashi]{{Tomohiro Hayashi} }
\address[Tomohiro Hayashi]
{Nagoya Institute of Technology, 
Gokiso-cho, Showa-ku, Nagoya, Aichi, 466-8555, Japan}

\email[Tomohiro Hayashi]{hayashi.tomohiro@nitech.ac.jp}

\baselineskip=17pt

\maketitle

\begin{abstract} 
In this paper we generalize the results shown by Das and Peterson. 
Let $M$ be a ${\rm II}_1$-factor acting on $L^2(M)$. We consider 
certain unital normal completely positive maps on $B(L^2(M))$ 
which are identity on $M$. 
We 
investigate their fixed point spaces 
and obtain a rigidity result. 
As an application, we show some results of subfactors. 
\end{abstract}

\subjclass{.}

\keywords{}

\section{Introduction}

Let $M$ be a ${\rm II}_1$-factor acting on $H=L^2(M)$. 
Let $\{z_n\}_{n=1}^\infty$ be a sequence in $M$ such that 
 $\{z_n\}_{n=1}^\infty$ is $*$-closed as a set and 
$\sum_{n=1}^\infty z_nz_n^*=\sum_{n=1}^\infty z_n^*z_n=1$. 
We define unital normal completely positive maps 
$\phi=\sum_{n=1}^\infty {\rm Ad} Jz_nJ$ 
and $\tilde{\phi}=\sum_{n=1}^\infty {\rm Ad} z_n^*$. 
In the paper \cite{DP}, 
Das and Peterson 
investigate the fixed point space $B(H)^\phi$ and they 
showed the following interesting results. If $M$ is generated by  $\{z_n\}_{n=1}^\infty$, 
then we have 
\begin{enumerate}
\item $M'\cap B(H)^\phi={\Bbb C}$. 
\item $B(H)^\phi\cap B(H)^{\tilde{\phi}}={\Bbb C}$.  
\item If $\Phi$ is a normal unital completely positive map from  $B(H)^\phi$ 
to  $B(H)^\phi$ such that $\Phi=id$ on $M$, then we have 
 $\Phi=id$ on $B(H)^\phi$.  
\item If $M\not =B(H)^\phi$, then $B(H)^\phi$ is an AFD type ${\rm III}$-factor. 
\end{enumerate}
The aim of this paper is to generalize these results without the assumption that 
$\{z_n\}_{n=1}^\infty$ generates $M$. Moreover we investigate 
the properties of the fixed point $\bigcap_{j=1}^N B(H)^{\phi_j}$. 
Consider the family of sequences 
\begin{align*}
{\frak S}
=\{\{z_n\}_{n=1}^\infty:&\ z_n\in M,\ 
\sum_{n=1}^\infty z_nz_n^*=\sum_{n=1}^\infty z_n^*z_n=1,\\ 
&{\text{for each $n$, there exists $k$ such that $z_n^*=z_k$}}
\}
\end{align*}
and set  
$
G=\{
\phi=\sum_{n=1}^\infty {\rm Ad} Jz_nJ:\ \{z_n\}_{n=1}^\infty\in{\frak S}
\}
$. Fix an element $\phi\in G$ with 
$\phi=\sum_{n=1}^\infty{\rm Ad}Jz_nJ$ and denote 
by $P$ the von Neumann algebra generated by $\{z_n\}_{n}$. 
Then we have the inclusions 
$$
P\subseteq M\subseteq \langle
M,e_P
\rangle\subseteq B(H)^{\phi}, 
$$ 
where $e_P$ is the Jones projection onto $L^2(P)$ \cite{J}. 
We will 
show the following. 
\begin{enumerate}
\item $M'\cap B(H)^{\phi}=M'\cap\langle M,e_P\rangle
=J(P'\cap M)J$ (Theorem 2.3). 
\item If 
$P$ is a factor, then 
$e_P(B(H)^{\phi}\cap B(H)^{\tilde{\phi}})e_P=
{\Bbb C}e_P$. (Lemma 3.7.)
\item 
If $\Phi$ is a normal unital completely positive map from  $B(H)^{\phi}$ 
to  $B(H)^{\phi}$ such that $\Phi=id$ on $\langle M,e_P\rangle$, then we have 
 $\Phi=id$ on $B(H)^\phi$ (Theorem 3.3.) 
\item If $P$ is a non-AFD factor, then $B(H)^{\phi}$ is an AFD type 
${\rm III}$-factor. (Corollary 3.9.)
\item
Fix finite elements $\phi_1,\phi_2,\cdots,\phi_N\in G$ with 
$\phi_i\circ\phi_j=\phi_j\circ\phi_i$
. Then there exists elements 
$z_n^{(j)}\in M$ such that $\phi_j=\sum_{n=1}^\infty{\rm Ad}Jz_n^{(j)}J$, 
$\sum_{n=1}^\infty z_n^{(j)}{z_n^{(j)}}^*=\sum_{n=1}^\infty {z_n^{(j)}}^*z_n^{(j)}=1$ and 
${z_n^{(j)}}^*=z_k^{(j)}$ for some $k$. For any positive numbers $\{\lambda_j\}_{j=1}^N$ with 
$\sum_{j=1}^N\lambda_j=1$, we set $\phi=\sum_{j=1}^N \lambda_j\phi_j$. Then we have 
$\bigcap_{j=1}^N B(H)^{\phi_j}=B(H)^\phi$. (Proposition 3.4.)

\end{enumerate}  
For the proof, we basically follow the argument in \cite{DP} with some modifications. 
As an application, we investigate the inclusions $B(H)^{\phi^k}\subseteq B(H)^{\phi^{mk}}$. 
We determine the Jones index and obtain some properties for these inclusions.

\section{basic properties}

Throughout this paper, let $M$ be a ${\rm II}_1$-factor with a unique tracial state $\tau$. 
We set $H=L^2(M,\tau)$ with a cyclic trace unit vector $\xi_0$.  
We consider the family of sequences 
\begin{align*}
{\frak S}
=\{\{z_n\}_{n=1}^\infty:&\ z_n\in M,\ 
\sum_{n=1}^\infty z_nz_n^*=\sum_{n=1}^\infty z_n^*z_n=1,\\ 
&{\text{for each $n$, there exists $k$ such that $z_n^*=z_k$}}
\}
\end{align*}
and set  
$
G=\{
\phi=\sum_{n=1}^\infty {\rm Ad} Jz_nJ:\ \{z_n\}_{n=1}^\infty\in{\frak S}
\}
$. 
Then each element $\phi$ in $G$ is a unital normal completely positive map 
from $B(H)$ to $B(H)$ such that $\phi=id$ on $M$. Here we
remark 
that $G$ is not closed under composition. 
For each $\phi=\sum_{n=1}^\infty{\rm Ad}Jz_nJ\in G$, we set 
$
\tilde{\phi}=\sum_{n=1}^\infty{\rm Ad}z_n^*
$. Then $\tilde{\phi}$ is also a unital normal completely positive map.

Let $P$ be a von Neumann algebra generated by $\{z_n\}_{n=1}^\infty$ and 
let $e_P$ be the Jones projection onto $L^2(P,\tau)$. Consider the Jones basic extension 
$$
P\subseteq M \subseteq\langle M,e_P\rangle=(JPJ)'.
$$
Let $B(H)^{\phi}$ be the set of fixed points of $\phi$. Since 
$Je_PJ=e_P$ and $e_P\in P'$, we see that $\phi(e_P)=
\sum_{n=1}^\infty Jz_nJe_PJz_n^*J=e_P\sum_{n=1}^\infty Jz_nz_n^*J=e_P
$. Thus we have the inclusions 
$$
P\subseteq M \subseteq\langle M,e_P\rangle\subseteq B(H)^{\phi}.
$$
Let $\omega_{0}$ be the vector state of a trace vector $\xi_0$. 
Then it is easy to see that 
$\omega_0\circ\phi=\omega_0\circ\tilde{\phi}$ on $B(H)$.

\begin{lem}
 $\omega_{0}$ is faithful on $e_PB(H)^{\phi}e_P$. 

\end{lem}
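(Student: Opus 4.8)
The plan is to show that any positive $x\in e_PB(H)^{\phi}e_P$ with $\omega_0(x)=0$ annihilates the whole subspace $L^2(P,\tau)$; since $x=e_Pxe_P$ this forces $x=0$, which is precisely the asserted faithfulness.

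I would first record a reduction. For $x\geq 0$ we have $\omega_0(x)=\|x^{1/2}\xi_0\|^2$, so $\omega_0(x)=0$ gives $x^{1/2}\xi_0=0$ and hence $x\xi_0=0$. More generally, writing $\omega_\eta(\,\cdot\,)=\langle\,\cdot\,\eta,\eta\rangle$ for $\eta\in H$, positivity of $x$ yields $x\eta=0\iff\omega_\eta(x)=0$, so it suffices to control the closed set of vectors killed by $x$. The computation I rely on is a description of $\phi$ dual to a motion of vector states: using $(Jz_nJ)^{*}=Jz_n^{*}J=\rho(z_n)$ (right multiplication by $z_n$) and expanding $\phi(x)=\sum_n (Jz_nJ)\,x\,(Jz_nJ)^{*}$, one gets for every $\eta\in H$
\[
\omega_\eta\bigl(\phi(x)\bigr)=\sum_{n=1}^{\infty}\omega_{\rho(z_n)\eta}(x) ,
\]
the case $\eta=\xi_0$ being the identity $\omega_0\circ\phi=\omega_0\circ\tilde{\phi}$ recorded before the statement. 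For $x\geq 0$ the partial sums of $\phi(x)$ increase and are bounded by $\|x\|$, so the series converges strongly and the term-by-term inner products are legitimate. Hence, if $x\geq 0$ is a fixed point of $\phi$, then $\omega_\eta(x)=\sum_n\omega_{\rho(z_n)\eta}(x)$ is a convergent sum of nonnegative numbers, and therefore $\omega_\eta(x)=0$ implies $\omega_{\rho(z_n)\eta}(x)=0$ for every $n$.

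I would then iterate, starting from $\eta=\xi_0$, where $\omega_{\xi_0}(x)=\omega_0(x)=0$. Since $\rho(z_m)(w\xi_0)=wz_m\xi_0$, induction gives $x\,(z_{n_1}z_{n_2}\cdots z_{n_k})\xi_0=0$ for every finite product of the $z_n$'s, together with $x\xi_0=0$. As $\{z_n\}$ is $*$-closed, these vectors span a subspace whose closure $K$ is invariant under the unital $*$-algebra generated by $\{z_n\}$, hence under its strong closure $P$; since $\xi_0\in K$, we get $K\supseteq\overline{P\xi_0}=L^2(P,\tau)$. Thus $x\zeta=0$ for all $\zeta\in L^2(P,\tau)$, i.e.\ $xe_P=0$, and then $x=e_Pxe_P=e_P(xe_P)=0$.

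The routine ingredients — strong convergence of the series defining $\phi(x)$ and the identity $\overline{P\xi_0}=L^2(P,\tau)$ (immediate, $\xi_0$ being the standard trace vector for $M\supseteq P$) — present no difficulty. The conceptual heart is the positivity‑splitting of the fixed‑point equation together with the iteration propagating $x\xi_0=0$ across all of $L^2(P,\tau)$; note that it is exactly the restriction to the corner $e_PB(H)^{\phi}e_P$ (rather than all of $B(H)^{\phi}$, on which $\omega_0$ is plainly not faithful) that makes the last implication $xe_P=0\Rightarrow x=0$ work. I do not anticipate any real obstacle beyond this bookkeeping.
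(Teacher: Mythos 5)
Your argument is correct and is essentially the paper's proof: the paper computes $0=\langle\phi^{n}(e_PTe_P)\xi_0,\xi_0\rangle=\sum_{w}\|T^{1/2}w\xi_0\|^2$ over words $w$ of length $n$ in the $z_n$'s, which is exactly your iterated positivity-splitting of $\omega_{\eta}(\phi(x))=\sum_n\omega_{\rho(z_n)\eta}(x)$ starting at $\eta=\xi_0$. Your density argument (the closed span of the word vectors is $P$-invariant and contains $\xi_0$, hence contains $L^2(P,\tau)$) just makes explicit the paper's remark that the $*$-words are total in $L^2(P,\tau)$, so no further comment is needed.
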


\begin{proof}
Let $T\in B(H)^{\phi}$ be a positive element satisfying 
$\omega_0(e_P Te_P)=0$. 
Since $e_P Te_P\in B(H)^{\phi}$, we see that 
$$
0=\omega_0(e_P Te_P)=\langle e_P Te_P\xi_0,\xi_0 \rangle
=\langle \phi^{n}(e_P Te_P)\xi_0,\xi_0 \rangle\\
=\sum ||T^{\frac{1}{2}}
w
\xi_0||^2,
$$
where $w$ is some word of $\{z_n\}_{n}$. Since the set $\{z_n\}_{n=1}^\infty$ is 
$*$-closed, the word $w$ runs through all $*$-words of  $\{z_n\}_{n}$. 
Thus we conclude that $T^{\frac{1}{2}}e_P=0$ and hence $e_P Te_P=0$. 
\end{proof}

\begin{lem}
If $T,T^*T\in B(H)^{\phi}$, then we have 
$T\in (\{Jz_nJ\}_{n=1}^\infty)'=\langle M,e_P\rangle$. 
In particular, $\langle M,e_P\rangle$ is a maximum $*$-subalgebra 
of $B(H)^{\phi}$. 

Similarly, 
if $T,T^*T\in B(H)^{\tilde{\phi}}$, then we have 
$T\in (\{z_n\}_{n=1}^\infty)'$.
\end{lem}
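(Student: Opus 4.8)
The plan is to read the hypothesis ``$T,T^*T\in B(H)^{\phi}$'' as the statement that $T$ lies in the multiplicative domain of $\phi$ (a Kadison--Schwarz phenomenon) and then to exploit the explicit Stinespring form of $\phi$ to extract commutation relations. Write $V_n=Jz_nJ$, so that $\phi(x)=\sum_n V_nxV_n^*$ with $\sum_n V_nV_n^*=\sum_n V_n^*V_n=1$. Define $\mathcal V\colon H\to H\otimes\ell^2$ by $\mathcal V\xi=\sum_n V_n^*\xi\otimes e_n$, where $\{e_n\}$ is the standard orthonormal basis of $\ell^2$. Then $\mathcal V$ is an isometry, since $\mathcal V^*\mathcal V=\sum_n V_nV_n^*=1$, and $\phi(x)=\mathcal V^*(x\otimes 1)\mathcal V$ for all $x\in B(H)$. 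Put $Q=1-\mathcal V\mathcal V^*$, the orthogonal projection onto $(\mathcal VH)^{\perp}$.

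Now $\phi(T)=T$ and $\phi(T^*T)=T^*T$ force $\phi(T^*T)=\phi(T)^*\phi(T)$, which in Stinespring form reads $\mathcal V^*(T^*\otimes 1)\,Q\,(T\otimes 1)\mathcal V=0$. Since $Q=Q^*Q$, for every $\xi\in H$ we get $\|Q(T\otimes 1)\mathcal V\xi\|^2=\langle\mathcal V^*(T^*\otimes 1)Q(T\otimes 1)\mathcal V\xi,\xi\rangle=0$, hence $Q(T\otimes 1)\mathcal V=0$, i.e.\ $(T\otimes 1)\mathcal VH\subseteq\mathcal VH$. Consequently $(T\otimes 1)\mathcal V=\mathcal V\mathcal V^*(T\otimes 1)\mathcal V=\mathcal V\phi(T)=\mathcal VT$. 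Applying both sides to $\xi$ and comparing $e_n$-components yields $TV_n^*=V_n^*T$ for every $n$. Because $\{z_n\}_n$ is $*$-closed, the set $\{V_n^*\}_n$ coincides with $\{V_n\}_n$, so $T$ commutes with every $V_n=Jz_nJ$; as $P$ is generated by $\{z_n\}_n$ this gives $T\in(\{Jz_nJ\}_n)'=(JPJ)'=\langle M,e_P\rangle$, the last equality being the basic-construction identity recalled above.

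The ``maximum subalgebra'' statement then follows: if $A\subseteq B(H)^{\phi}$ is any $*$-subalgebra, every $T\in A$ satisfies $T,T^*T\in A\subseteq B(H)^{\phi}$, so $T\in\langle M,e_P\rangle$ by the above, whence $A\subseteq\langle M,e_P\rangle$; and $\langle M,e_P\rangle$ is itself a $*$-subalgebra of $B(H)^{\phi}$. For $\tilde\phi=\sum_n{\rm Ad}\,z_n^*$ one runs the identical argument with $z_n^*$ in place of $V_n$: now $\sum_n z_n^*(z_n^*)^*=\sum_n z_nz_n^*=1$ and $\sum_n (z_n^*)^*z_n^*=\sum_n z_n^*z_n=1$, the analogous dilation produces $Tz_n=z_nT$ for all $n$, and so $T\in(\{z_n\}_n)'$.

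I do not expect a real obstacle here. The only step that needs a moment's care is the passage from $\mathcal V^*(T^*\otimes 1)Q(T\otimes 1)\mathcal V=0$ to $Q(T\otimes 1)\mathcal V=0$, which is immediate because $Q$ is a projection; everything else is bookkeeping with the basis $\{e_n\}$ and the standard fact $\langle M,e_P\rangle=(JPJ)'$. If one prefers to avoid the dilation, the same conclusion comes out of expanding $\phi(T^*T)-\phi(T)^*\phi(T)$ directly and recognizing it as a manifestly nonnegative expression that must vanish term by term; the dilation picture merely makes this transparent.
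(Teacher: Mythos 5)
Your proof is correct and is essentially the paper's own argument: since $Q(T\otimes 1)\mathcal V\xi=(T\otimes 1)\mathcal V\xi-\mathcal V T\xi=\sum_n\bigl(TJz_n^*J-Jz_n^*JT\bigr)\xi\otimes e_n$, the quantity $\|Q(T\otimes 1)\mathcal V\xi\|^2$ you show vanishes is exactly the sum $\sum_n\|(Jz_n^*JT-TJz_n^*J)\xi\|^2$ that the paper expands directly, so the Stinespring packaging of the multiplicative-domain argument is only a cosmetic difference. (Harmless slip: $\sum_n z_n^*(z_n^*)^*$ is $\sum_n z_n^*z_n$, not $\sum_n z_nz_n^*$, though both equal $1$.)
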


\begin{proof}
Assume that $T,T^*T\in B(H)^{\phi}$. 
We observe that for any $\xi\in H$, 
\begin{align*}
&||(Jz_n^*JT-TJz_n^*J)\xi||^2=
\langle
(T^*Jz_nJ-Jz_nJT^*)(Jz_n^*JT-TJz_n^*J)\xi.\xi
\rangle\\
&=\langle
(T^*Jz_nz_n^*JT+Jz_nJT^*TJ{z_n}^*J-T^*Jz_nJTJz_n^*J
-Jz_nJT^*Jz_n^*JT)\xi,
\xi\rangle
\end{align*}
Then we have
$$
\sum_{n=1}^\infty||(Jz_n^*JT-TJz_n^*J)\xi||^2=
\langle
(T^*T+\phi(T^*T)-T^*\phi(T)-\phi(T^*)T)\xi,\xi
\rangle=0.
$$
Since the set $\{z_n\}_{n=1}^\infty$ is 
$*$-closed, we conclude that $T\in (\{Jz_nJ\}_{n=1}^\infty)'=(JPJ)'=\langle M,e_P\rangle$. 
Next we will show that $\langle M,e_P\rangle$ is a maximum $*$-subalgebra 
of $B(H)^{\phi}$. 
Let $A$ be a subset of $B(H)^\phi$ such that $A$ is a $*$-subalgebra 
of $B(H)$. Then for any $T\in A$, since $T,T^*T\in A\subseteq B(H)^\phi$, 
we have $T\in \langle M,e_P\rangle$. Therefore we conclude 
$A\subseteq \langle M,e_P\rangle$.

By the similar way, if $T,T^*T\in B(H)^{\tilde{\phi}}$, then we have 
$$
\sum_{n=1}^\infty||(z_nT-Tz_n)\xi||^2=
\langle
(T^*T+\tilde{\phi}(T^*T)-T^*\tilde{\phi}(T)-\tilde{\phi}(T^*)T)\xi,\xi
\rangle=0
$$ 
and hence 
$T\in (\{z_n\}_{n=1}^\infty)'$.
\end{proof}

\begin{thm}
$M'\cap B(H)^{\phi}=M'\cap\langle M,e_P\rangle
=J(P'\cap M)J$. 
\end{thm}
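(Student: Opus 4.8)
The plan is to prove the two equalities separately, chaining through the inclusions already established. The inclusion $J(P'\cap M)J \subseteq M'\cap\langle M,e_P\rangle$ is essentially formal from the standard theory of the basic construction: $J(P'\cap M)J$ sits inside $\langle M,e_P\rangle = (JPJ)'$ because $P'\cap M$ commutes with $P$, hence $J(P'\cap M)J$ commutes with $JPJ$; and it lies in $M' = JMJ$ since $P'\cap M \subseteq M$. Likewise $M'\cap\langle M,e_P\rangle \subseteq M'\cap B(H)^\phi$ is immediate from $\langle M,e_P\rangle\subseteq B(H)^\phi$. So the work is to close the loop by showing $M'\cap B(H)^\phi \subseteq J(P'\cap M)J$.

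For that last inclusion I would take $T \in M'\cap B(H)^\phi = JMJ \cap B(H)^\phi$ and aim to apply Lemma 2.2. The obstacle is that Lemma 2.2 requires both $T$ and $T^*T$ to be $\phi$-fixed, whereas a priori only $T$ is. But here $T\in JMJ$, so $T^*T\in JMJ$ as well, and every element of $JMJ = M'$ is already fixed by $\phi = \sum_n \mathrm{Ad}\,Jz_nJ$ — indeed for $x\in M$, $\phi(JxJ) = \sum_n Jz_nJ\,JxJ\,Jz_n^*J = J\big(\sum_n z_n x z_n^*\big)J$, and wait, that is not obviously $JxJ$. Let me instead argue directly: for $x\in M$ and $y\in JMJ$ one has $\mathrm{Ad}\,Jz_nJ$ acting, but the cleaner route is that $T\in M'$ means $T$ commutes with all of $M\supseteq P$, and $\phi(T)=\sum_n (Jz_nJ)T(Jz_n^*J)$; since $T$ commutes with $Jz_n^*J$? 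No — $Jz_nJ\in JMJ = M'$ and $T\in M'$, but $M'$ need not be abelian. So the right observation is simply that $T, T^*T\in M' \subseteq$ the relevant fixed space once we check $M'\subseteq B(H)^\phi$, which follows because $\phi$ restricted to $M'=JMJ$ is $\sum_n \mathrm{Ad}\,Jz_nJ$ and this fixes $JMJ$ pointwise iff $\tilde\phi = \sum_n \mathrm{Ad}\,z_n^*$ fixes $M$ pointwise — and $\tilde\phi(x) = \sum_n z_n^* x z_n$, which for $x\in M$ need not equal $x$ either. So in fact $M'$ is \emph{not} contained in $B(H)^\phi$ in general, and the hypothesis $T\in M'\cap B(H)^\phi$ is a genuine constraint: I get $T^*T\in M'$ but must separately note $T^*T = T^*T\in B(H)^\phi$ because... hmm, $B(H)^\phi$ is not an algebra. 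The resolution is that $M'\cap B(H)^\phi$ is closed under the map $S\mapsto S^*S$ when $S$ is in it? That is exactly Lemma 2.2's output domain. I think the intended argument is: $T\in M'\cap B(H)^\phi$, and since $T\in M'$ we have $T^*T\in M'$; moreover $\phi$ acts on $M'$ and one checks $\phi|_{M'}$ is a normal u.c.p. map with $\phi(T)=T$, so by a Schwarz-inequality / multiplicative-domain argument $T$ lies in the multiplicative domain, forcing $\phi(T^*T)=\phi(T)^*\phi(T)=T^*T$. Then Lemma 2.2 gives $T\in\langle M,e_P\rangle$, so $T\in M'\cap\langle M,e_P\rangle = JMJ\cap(JPJ)'$; and $y\in JMJ$ with $y\in(JPJ)'$ means $JyJ\in M\cap P' = P'\cap M$, i.e. $T = J(JyJ)J \in J(P'\cap M)J$.

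The main obstacle, then, is the multiplicative-domain step: showing that a $\phi$-fixed element $T$ that additionally lies in $M'$ satisfies $\phi(T^*T)=T^*T$, so that Lemma 2.2 applies. I expect this to follow from the standard fact that for a unital completely positive map, a self-adjoint fixed point, or more relevantly the boundary elements of the unit ball, lie in the multiplicative domain; one writes out $\sum_n \|(Jz_n^*J)T - T(Jz_n^*J)\|$-type expressions as in the proof of Lemma 2.2 and uses $\phi(T)=T$ together with $T\in M'$ to kill the cross terms. Once that identity is in hand, the rest is the bookkeeping with $J(\cdot)J$ and the characterization $\langle M,e_P\rangle = (JPJ)'$ recalled above. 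I would write the proof as: (1) the easy chain of inclusions $J(P'\cap M)J \subseteq M'\cap\langle M,e_P\rangle \subseteq M'\cap B(H)^\phi$; (2) for the reverse, take $T\in M'\cap B(H)^\phi$, establish $T^*T\in B(H)^\phi$ via the multiplicative domain; (3) invoke Lemma 2.2 to land in $\langle M,e_P\rangle$; (4) translate $JMJ\cap(JPJ)' = J(P'\cap M)J$.
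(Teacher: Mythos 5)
Your overall architecture matches the paper's: the two easy inclusions, then the reduction of $M'\cap B(H)^{\phi}\subseteq\langle M,e_P\rangle$ to Lemma 2.2, which requires knowing that $T\in M'\cap B(H)^{\phi}$ forces $\phi(T^*T)=T^*T$. The gap is in how you propose to obtain that identity. The ``standard fact'' you invoke --- that a (self-adjoint) fixed point of a unital completely positive map lies in its multiplicative domain --- is false in general: for $\phi(T)=S^*TS$ with $S$ the unilateral shift, the fixed points are exactly the Toeplitz operators, which do not form an algebra. Likewise, the computation from Lemma 2.2 only yields
$\sum_{n}\|(Jz_n^*JT-TJz_n^*J)\xi\|^2=\langle(\phi(T^*T)-T^*T)\xi,\xi\rangle$
once you substitute $\phi(T)=T$; by the Schwarz inequality this is $\geq 0$, and nothing about $T\in M'$ ``kills'' it, since $T$ and the $Jz_nJ$ all live in $M'$ and need not commute with one another.

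The missing ingredient is that $\phi$ restricted to $M'=JMJ$ preserves the faithful tracial state $JxJ\mapsto\tau(x^*)$: for $x\in M$ one has $\sum_{n}\tau(z_nx^*z_n^*)=\sum_{n}\tau(x^*z_n^*z_n)=\tau(x^*)$ by traciality and $\sum_{n}z_n^*z_n=1$ (together with the easy observation $\phi(M')\subseteq M'$). For a trace-preserving unital completely positive map on a finite von Neumann algebra the fixed point set \emph{is} a von Neumann subalgebra: $\phi(T^*T)\geq\phi(T)^*\phi(T)=T^*T$ by Schwarz, and the difference is a positive element of $M'$ with trace zero, hence zero by faithfulness. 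This is Lemma 2.5 of Das--Peterson, which is precisely what the paper cites at this point. Once that is in place, your steps (2)--(4), including the translation $JMJ\cap(JPJ)'=J(P'\cap M)J$, go through as written.
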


\begin{proof}
For any $x\in M$, we see that 
\begin{align*}
\tau(\phi(JxJ))&=\sum_{n=1}^\infty\langle
Jz_nxz_n^*J\xi_0,\xi_0
\rangle=\sum_{n=1}^\infty\tau(z_nx^*z_n^*)\\
&=\sum_{n=1}^\infty\tau(x^*z_n^*z_n)=\tau(x^*)
=\tau(JxJ).
\end{align*}
Moreover we have $\phi(M')\subseteq M'$. Thus $\phi$ is a 
unital normal completely positive map from $M'$ to $M'$ which preserves 
the faithful traicial state. Then by lemma 2.5 in 
\cite{DP} the fixed point space 
$(M')^{\phi}=M'\cap B(H)^{\phi}$ is a $*$-algebra. 
Therefore $M'\cap B(H)^{\phi}$ is a $*$-subalgebra of 
$B(H)^{\phi}$. 
Then by the previous lemma we have $M'\cap B(H)^{\phi}\subseteq 
\langle
M,e_P
\rangle
$. 
\end{proof}

\section{rigidity}

For any $T\in B(H)$, we set 
$$
E(T)=\lim_{n\rightarrow\omega}\dfrac{1}{n}\sum_{j=1}^n\phi^j(T),\ \ \ \ 
\tilde{E}(T)=\lim_{n\rightarrow\omega}\dfrac{1}{n}\sum_{j=1}^n{\tilde{\phi}}^j(T),
$$ 
where the limits are taken 
with respect to the weak operator topology and $\omega$ is some free ultrafilter. 
Then $E$ is a conditional expectation from $B(H)$ onto $B(H)^{\phi}$ and   
$\tilde{E}$ is a conditional expectation from $B(H)$ onto $B(H)^{\tilde{\phi}}$. 
Then $B(H)^{\phi}$ is 
an AFD von Neumann algebra via the Choi-Effros product 
$
S\circ T=E(ST)
$. 
Here we remark that $S\circ z=Sz$ and $z\circ S=zS$ 
for any $z\in \langle M,e_P\rangle$ because 
$\langle M,e_P\rangle$ is in the multiplicative domain of $\phi$ and $\phi=id$ on 
$\langle M,e_P\rangle$. In particular 
$x\circ y=xy$ for any $x,y\in \langle M,e_P\rangle$. 
Similarly,  $B(H)^{\tilde{\phi}}$ is 
an AFD von Neumann algebra via the Choi-Effros product 
$
S\circ T=\tilde{E}(ST)
$. (See \cite{I}.)

\begin{prop}
If $P$ is a factor, then $B(H)^{\phi}$ is also a factor via the 
Choi-Effros product. If $P\subseteq M$ is irreducible, 
the inclusion $M\subseteq B(H)^{\phi}$ is also irreducible. 
\end{prop}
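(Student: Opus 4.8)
The plan is to use the trace vector $\xi_0$ together with the conditional expectation $E\colon B(H)\to B(H)^\phi$ to transport factoriality from $P$ to $B(H)^\phi$. First I would observe that the vector state $\omega_0$ is a faithful trace on $B(H)^\phi$ \emph{with respect to the Choi--Effros product}: indeed $\omega_0\circ\phi=\omega_0$ since $\sum_n z_n^*z_n=1$, so $\omega_0\circ E=\omega_0$, hence $\omega_0(S\circ T)=\omega_0(E(ST))=\omega_0(ST)$ for $S,T\in B(H)^\phi$, and the trace property $\omega_0(ST)=\omega_0(TS)$ is just the trace property of $\tau$ on $B(H)$ restricted to the cyclic trace vector. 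Faithfulness of $\omega_0$ on $B(H)^\phi$ follows from Lemma 2.1 once one checks $e_P$ does not shrink the support: more precisely, $\xi_0\in L^2(P)$, so $e_P\xi_0=\xi_0$, and $\omega_0(T)=\omega_0(e_PTe_P)$ for positive $T\in B(H)^\phi$, whence $\omega_0(T)=0$ forces $e_PTe_P=0$; then applying $\phi^n$ and using that the words in $\{z_n\}$ exhaust all $*$-words, as in Lemma 2.1, gives $T=0$. So $(B(H)^\phi,\circ,\omega_0)$ is a finite von Neumann algebra with a faithful tracial state.

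Next I would identify the center. Let $Z$ denote the center of $B(H)^\phi$ under $\circ$. Any central element $T$ in particular satisfies $T\circ T^*=T^*\circ T$, but more usefully: if $T\in Z$ then $T$ commutes (in the $\circ$-product) with every element of $\langle M,e_P\rangle$, and since $\langle M,e_P\rangle$ lies in the multiplicative domain we have $T\circ x=Tx$ and $x\circ T=xT$ for $x\in\langle M,e_P\rangle$; hence $Tx=xT$ genuinely in $B(H)$, so $T\in\langle M,e_P\rangle'=JPJ$. But also $T\in B(H)^\phi$, so $T\in JPJ\cap B(H)^\phi$. Now $JPJ\subseteq B(H)^\phi$ anyway (each $Jz_nJ$ is fixed since $\sum Jz_nz_n^*J=1$ and the $z_n$ commute with... — more carefully, $JPJ=(\{Jz_nJ\})''$ is contained in $(\{Jz_nJ\})'=\langle M,e_P\rangle$? no). The cleaner route: $T\in\langle M,e_P\rangle'=JPJ$ and, being central in $B(H)^\phi$, $T$ also commutes with $JPJ$ itself (which sits inside $B(H)^\phi$), so $T\in JPJ\cap (JPJ)'=Z(JPJ)=J Z(P)J=\mathbb{C}$ because $P$ is a factor. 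Hence $Z=\mathbb C$ and $B(H)^\phi$ is a factor.

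For the second assertion, suppose $P\subseteq M$ is irreducible, i.e. $P'\cap M=\mathbb C$. By Theorem 2.3, $M'\cap B(H)^\phi=J(P'\cap M)J=\mathbb C$. The relative commutant of $M$ in $B(H)^\phi$ computed with the Choi--Effros product is $\{T\in B(H)^\phi: T\circ x=x\circ T\ \forall x\in M\}$; since $M\subseteq\langle M,e_P\rangle$ lies in the multiplicative domain, this equals $\{T\in B(H)^\phi: Tx=xT\ \forall x\in M\}=M'\cap B(H)^\phi=\mathbb C$. Therefore $M\subseteq B(H)^\phi$ is irreducible.

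The main obstacle I anticipate is the bookkeeping around two different products: one must be careful that ``commutant'' and ``center'' are taken with respect to the Choi--Effros product $\circ$, yet exploit that on $\langle M,e_P\rangle$ this product agrees with the ordinary one (the multiplicative-domain remark recorded before the proposition), so that $\circ$-commutation with $\langle M,e_P\rangle$ upgrades to genuine $B(H)$-commutation and Theorem 2.3 becomes applicable. Once that translation is made cleanly the factoriality is immediate from $Z(P)=\mathbb C$, and irreducibility is immediate from $P'\cap M=\mathbb C$ via Theorem 2.3.
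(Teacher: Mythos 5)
Your center computation starts correctly and the irreducibility half is fine: a $\circ$-central element $T$ $\circ$-commutes with $\langle M,e_P\rangle$, which lies in the multiplicative domain, so this is genuine commutation and $T\in\langle M,e_P\rangle'=JPJ$; likewise the relative commutant of $M$ reduces to $M'\cap B(H)^{\phi}=J(P'\cap M)J=\mathbb{C}$ by Theorem 2.3. But the decisive step of the factoriality argument fails as written: $JPJ$ does \emph{not} sit inside $B(H)^{\phi}$ in general. Indeed $\phi(Jz_mJ)=\sum_n Jz_nz_mz_n^*J$, which is not $Jz_mJ$ (check this in the free-group example of Section 4), so you cannot play centrality of $T$ against $JPJ$ to get $T\in(JPJ)'$ --- and even if $JPJ$ were contained in $B(H)^{\phi}$, $\circ$-commutation with its elements would not upgrade to genuine commutation, since $JPJ$ is not in the multiplicative domain. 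The repair is short and uses the tool you already cite: from $T\in JPJ\subseteq JMJ=M'$ and $T\in B(H)^{\phi}$, Theorem 2.3 gives $T\in M'\cap B(H)^{\phi}=J(P'\cap M)J$, hence $T\in JPJ\cap J(P'\cap M)J=JZ(P)J=\mathbb{C}$. (The paper argues in the opposite order: it first places $Z(B(H)^{\phi})$ inside $M'\cap B(H)^{\phi}=M'\cap\langle M,e_P\rangle$ via Theorem 2.3, and only then, knowing $T$ is in the multiplicative domain, upgrades $\circ$-centrality to $T\in(B(H)^{\phi})'\cap\langle M,e_P\rangle\subseteq Z(\langle M,e_P\rangle)=\mathbb{C}$.) Either way Theorem 2.3 is the missing ingredient, not an inclusion $JPJ\subseteq B(H)^{\phi}$.

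Your first paragraph should be deleted: it is unused in the sequel and its claims are false. One does not have $\omega_0\circ\phi=\omega_0$ on $B(H)$; the correct identity is $\omega_0(\phi(T))=\sum_n\langle Tz_n\xi_0,z_n\xi_0\rangle=\omega_0(\tilde{\phi}(T))$, which is why the paper records only $\omega_0\circ\phi=\omega_0\circ\tilde{\phi}$. Nor is $\omega_0$ faithful on all of $B(H)^{\phi}$ when $P\neq M$: the positive element $1-e_P\in B(H)^{\phi}$ satisfies $\omega_0(1-e_P)=\|(1-e_P)\xi_0\|^2=0$, which is why Lemma 2.1 is stated only for the corner $e_PB(H)^{\phi}e_P$ (and why your appeal to it cannot yield $T=0$, only $e_PTe_P=0$). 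Had $(B(H)^{\phi},\circ)$ carried a faithful normal tracial state it would be a finite von Neumann algebra, contradicting Corollary 3.9, which shows it can be a type ${\rm III}$ factor.
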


\begin{proof}
By theorem 2.3, 
we see that $Z(B(H)^{\phi})
\subseteq M'\cap B(H)^{\phi}=M'\cap\langle M,e_P\rangle
$. Then we have 
$
Z(B(H)^{\phi})\subseteq 
(B(H)^{\phi})'\cap \langle M,e_P\rangle
\subseteq Z(\langle M,e_P\rangle)={\Bbb C}
$. 

Similarly, 
if $P'\cap M={\Bbb C}$, then we see that 
$
M'\cap B(H)^{\phi}=M'\cap\langle M,e_P\rangle
=J(P'\cap M)J={\Bbb C}
$. 
\end{proof}

\begin{lem}
$e_P(B(H)^\phi\cap B(H)^{\tilde{\phi}})e_P\subseteq\langle M,e_P\rangle
\cap \langle M',e_P\rangle$.

\end{lem}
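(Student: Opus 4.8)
The statement to prove is Lemma 3.7 (displayed as the final statement of the excerpt):
$$
e_P(B(H)^\phi\cap B(H)^{\tilde{\phi}})e_P\subseteq\langle M,e_P\rangle\cap \langle M',e_P\rangle.
$$
The plan is to take $T\in B(H)^\phi\cap B(H)^{\tilde\phi}$ and show that $e_PTe_P$ lies in each of the two algebras on the right. For the inclusion $e_PTe_P\in\langle M,e_P\rangle=(JPJ)'$, I would exploit that $T\in B(H)^\phi$ together with the structural information from Lemma 2.2: the obstruction to $T$ commuting with $\{Jz_nJ\}$ is exactly controlled by $\sum_n\|(Jz_n^*JT-TJz_n^*J)\xi\|^2=\langle(T^*T+\phi(T^*T)-T^*\phi(T)-\phi(T^*)T)\xi,\xi\rangle$. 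We do not have $T^*T\in B(H)^\phi$ in general, but we are looking at the compression $e_PTe_P$, and $e_P\in(JPJ)'$ already commutes with every $Jz_nJ$. The idea is that after compressing by $e_P$, the relevant positivity defect can be absorbed: since $Je_PJ=e_P$ and $e_P$ commutes with $JPJ$, one computes $\phi(e_PTe_P)=e_PTe_P$ and then estimates $\sum_n\|(Jz_n^*J-Jz_n^*J)\cdots\|$ directly — the key point being that $e_PTe_P\cdot Jz_n^*J e_P = e_P T Jz_n^*J e_P$ and one can play the defect of $T$ against the defect of $T^*$ using that $e_P\phi(\,\cdot\,)e_P$ and $\phi(e_P\,\cdot\,e_P)$ coincide on the relevant terms.

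More concretely, the honest route is: put $S=e_PTe_P$ and note $S,S^*\in B(H)^\phi$ (since $B(H)^\phi\supseteq\langle M,e_P\rangle\ni e_P$ and $B(H)^\phi$ is closed under the Choi–Effros product, or more elementarily since $\phi(e_P)=e_P$ and $e_P$ is in the multiplicative domain, $\phi(e_PTe_P)=e_P\phi(T)e_P=e_PTe_P$). Then I want $S^*S\in B(H)^\phi$ as well, so that Lemma 2.2 applies and gives $S\in(\{Jz_nJ\})'=\langle M,e_P\rangle$. Now $S^*S=e_PT^*e_PTe_P$, and here I would use $T\in B(H)^{\tilde\phi}$: by the second half of Lemma 2.2, $T$ and $T^*T$ being in $B(H)^{\tilde\phi}$ would give $T\in(\{z_n\})'=P'$; but again I only have $T\in B(H)^{\tilde\phi}$, not $T^*T$. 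The correct bookkeeping is to observe that $e_P$ implements the trace-preserving conditional expectation $E_P$ onto $P$ (in the sense $e_Pxe_P=E_P(x)e_P$ for $x\in M$), and $\tilde\phi$ fixes $P$ pointwise; combining $T\in B(H)^\phi\cap B(H)^{\tilde\phi}$ one shows the cross term $e_PT^*e_PTe_P$ is itself $\phi$-fixed by a direct computation using $\phi(e_P\cdot e_P)=e_P\phi(\cdot)e_P$ and the fact that $\phi$ and $\tilde\phi$ are "compatible" through $\omega_0\circ\phi=\omega_0\circ\tilde\phi$. Then Lemma 2.2 yields $e_PTe_P\in\langle M,e_P\rangle$.

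For the symmetric inclusion $e_PTe_P\in\langle M',e_P\rangle=(\{z_n\})''\vee\{e_P\}$, I would run the mirror-image argument: $\langle M',e_P\rangle$ is the algebra adapted to $\tilde\phi$ in the same way $\langle M,e_P\rangle$ is adapted to $\phi$ (indeed $(\{z_n\})'\cap(\{z_n^*\})'$-type considerations, with $e_P\in P'\subseteq(\{z_n\})'$), so using the second statement of Lemma 2.2 with $T\in B(H)^{\tilde\phi}$ and the $\tilde\phi$-fixedness of $e_PT^*e_PTe_P$ (established exactly as above, with $\phi$ and $\tilde\phi$ interchanged, using $\tilde\phi(e_P)=e_P$ since $e_P\in P'$ and $e_P$ is a $\tilde\phi$-multiplicative-domain element because $\tilde\phi=\mathrm{id}$ on $P\ni$ a generating set for $e_P$'s commutation), one gets $e_PTe_P\in(\{z_n\})'$, hence in $\langle M',e_P\rangle$ since it also commutes with $e_P$ (as $e_PTe_P$ manifestly does: $e_P(e_PTe_P)=(e_PTe_P)e_P$). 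Intersecting the two memberships gives the claim.

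The main obstacle I anticipate is the step asserting that the compressed cross-term $e_PT^*e_PTe_P$ is fixed by $\phi$ (and by $\tilde\phi$). This is where both hypotheses $T\in B(H)^\phi$ and $T\in B(H)^{\tilde\phi}$ must be used simultaneously, and it does not follow from either one alone; the clean way to see it is probably to expand $\phi(e_PT^*e_PTe_P)$ using that $\langle M,e_P\rangle$ (which contains $e_P$) lies in the multiplicative domain of $\phi$ — so $\phi(e_PT^*e_PTe_P)=e_P\phi(T^*e_PT)e_P$ — and then to handle $\phi(T^*e_PT)$ by a Cauchy–Schwarz/positivity argument à la Lemma 2.2, showing $\phi(T^*e_PT)=T^*\phi(e_P)T=T^*e_PT$ exactly when the defects $\sum_n\|(Jz_n^*JT-TJz_n^*J)e_P\,\cdot\|^2$ vanish after compression — which is where the interplay with $\tilde\phi$ and Lemma 3.1 (faithfulness of $\omega_0$ on $e_PB(H)^\phi e_P$) enters to upgrade "the state vanishes" to "the operator vanishes." Everything else is routine manipulation with Jones projections and the identities $Je_PJ=e_P$, $e_P\in P'$, $e_Pxe_P=E_P(x)e_P$.
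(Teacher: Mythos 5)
Your proposal correctly isolates the crux --- for $T\in B(H)^\phi\cap B(H)^{\tilde\phi}$ one knows $S=e_PTe_P$ and $S^*$ are $\phi$-fixed, but Lemma 2.2 also needs the \emph{ordinary} product $S^*S=e_PT^*e_PTe_P$ to be $\phi$-fixed, and neither hypothesis gives this --- but you do not close this gap, and the route you sketch for closing it does not work as stated. The defect is the positive operator
$\phi(T^*e_PT)-T^*e_PT=\sum_n\bigl(TJz_n^*J-Jz_n^*JT\bigr)^*e_P\bigl(TJz_n^*J-Jz_n^*JT\bigr)\ge 0$,
and to kill it by a state argument you would need a faithful $\phi$-invariant state on a set containing this defect. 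Lemma 2.1 only gives faithfulness of $\omega_0$ on $e_PB(H)^\phi e_P$, and the defect operator does not lie there: it is neither $e_P$-compressed nor known to be $\phi$-fixed, so "upgrading the state vanishes to the operator vanishes" is exactly the step that is missing. Proving $S^*S\in B(H)^\phi$ directly is essentially equivalent to the desired conclusion $S\in(JPJ)'$, so the argument is circular as sketched.

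The paper circumvents this by never touching the ordinary product $S^*S$. It endows $B(H)^\phi$ with the Choi--Effros product $S\circ T=E(ST)$; for $S\in(e_PB(H)^\phi e_P)^{\tilde\phi}$ the Kadison--Schwarz inequality $S^*\circ S=\tilde\phi(S^*)\circ\tilde\phi(S)\le\tilde\phi(S^*\circ S)$, together with $\omega_0\circ\tilde\phi=\omega_0\circ\phi=\omega_0$ and the faithfulness of $\omega_0$ on $e_PB(H)^\phi e_P$ (where $S^*\circ S=E(S^*S)$ \emph{does} live), forces $\tilde\phi(S^*\circ S)=S^*\circ S$, so that $(e_PB(H)^\phi e_P)^{\tilde\phi}$ is a von Neumann algebra under $\circ$. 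It then suffices to treat a projection $p=p\circ p$ in this algebra, for which $\sum_n\|p\circ z_n\circ(1-p)\xi\|^2=\langle(1-p)\circ\tilde\phi(p)\circ(1-p)\xi,\xi\rangle=0$ gives $p\circ z_n=z_n\circ p$, i.e.\ $pz_n=z_np$ because $z_n$ lies in the multiplicative domain; since projections generate, the whole compression lands in $\{z_n\}_n'=P'=\langle M',e_P\rangle$, and the mirror argument with $\phi$ and $Jz_nJ$ gives $\langle M,e_P\rangle$. To salvage your outline you would have to replace "show $S^*S$ is $\phi$-fixed" by this reduction to Choi--Effros projections. (Also note $\langle M',e_P\rangle$ equals $P'=\{z_n\}_n'$, not $(\{z_n\})''\vee\{e_P\}$.)
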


\begin{proof}
Since $\tilde{\phi}(e_P)=e_P$ and $\phi\circ\tilde{\phi}=\tilde{\phi}\circ\phi$, 
$\tilde{\phi}$ preserves $e_PB(H)^{\phi}e_P$. 
For any $T\in e_PB(H)^{\phi}e_P$, we see that 
$$
\omega_0(\tilde{\phi}(T))=\omega_0(\phi(T))=\omega_0(T).
$$
Then for any $S\in (e_PB(H)^{\phi}e_P)^{\tilde{\phi}}$, 
since $S^*\circ S=\tilde{\phi}(S^*)\circ\tilde{\phi}(S)\leq \tilde{\phi}(S^*\circ S)$, we have 
$
\omega_0(S^*\circ S)\leq \omega_0(\tilde{\phi}(S^*\circ S))
=\omega_0(S^*\circ S)
$. 
Here recall that $S^*\circ S=E(S^*S)$. 
Since by lemma 2.1 $\omega_0$ is faithful on $
(e_PB(H)^{\phi}e_P)^{\tilde{\phi}}\subseteq 
e_PB(H)^{\phi}e_P$, we get 
$
 \tilde{\phi}(S^*\circ S)=S^*\circ S
$. Thus $(e_PB(H)^{\phi_j}e_P)^{\tilde{\phi}}$ is a von Neumann 
subalgebra of $B(H)^{\phi}$ via the Choi-Effross product. 

Take any projection $p\in (e_PB(H)^{\phi}e_P)^{\tilde{\phi}}$. That is, 
$p\geq0$ and $p\circ p=p$. 
We consider that $B(H)^{\phi}$ 
is a von Neumann subalgebra of $B(K)$. That is, the Choi-Effross producti is compatible 
with the usual product in $B(K)$. 
Since $\tilde{\phi}(p)=p$, 
for any $\xi\in K$ 
we see that 
\begin{align*}
\sum_{n=1}^\infty||p\circ z_n\circ(1-p)\xi||^2&=
\langle
\tilde{\phi}(p)\circ(1-p)\xi,(1-p)\xi)
\rangle\\ 
&=
\langle
p\circ(1-p)\xi,(1-p)\xi)
\rangle
=0
\end{align*}
and hence $p\circ  z_n= z_n\circ p$. 
Since $p\circ  z_n=pz_n$ and 
$z_n\circ p=z_np$, we conclude $p\in \{z_n\}_n'$. 
Therefore we have $(e_PB(H)^{\phi}e_P)^{\tilde{\phi}}\subseteq\{z_n\}_n'$. 
Since $(e_PB(H)^{\phi}e_P)^{\tilde{\phi}}=e_P(B(H)^\phi\cap B(H)^{\tilde{\phi}})e_P$ 
and $\{z_n\}_n'=P'=\langle M',e_P\rangle$, we conclude that 
$
e_P(B(H)^\phi\cap B(H)^{\tilde{\phi}})e_P\subseteq \langle M',e_P\rangle
$. 

Similarly we can show that 
$(e_PB(H)^{\tilde{\phi}}e_P)^{\phi}\subseteq\{J z_nJ\}'$. Since 
$(e_PB(H)^{\tilde{\phi}}e_P)^{{\phi}}=e_P(B(H)^\phi\cap B(H)^{\tilde{\phi}})e_P$ 
and $\{Jz_nJ\}_n'=(JPJ)'=\langle M,e_P\rangle$, we conclude that 
$
e_P(B(H)^\phi\cap B(H)^{\tilde{\phi}})e_P\subseteq \langle M,e_P\rangle
$.

\end{proof}

\begin{thm}
Let $\langle M,e_P\rangle\subseteq X \subseteq B(H)^{\phi}$ be a 
$\langle M,e_P\rangle$-$\langle M,e_P\rangle$ bimodule  
and let $\Phi$ be a unital normal linear map from $X$ to $B(H)^{\phi}$ such that 
$
\Phi(xTy)=x\Phi(T)y
$ for any $x,y\in \langle M,e_P\rangle$ and $T\in X$. 
Then $\Phi$ is identity on $X$. 

Similarly, let $Pe_P=e_P\langle M,e_P\rangle e_P\subseteq X \subseteq 
e_PB(H)^{\phi}e_P$ be a 
$Pe_P$-$Pe_P$ bimodule  
and let $\Phi$ be a unital normal 
linear map from $X$ to $e_PB(H)^{\phi}e_P$ such that 
$
\Phi(xTy)=x\Phi(T)y
$ for any $x,y\in Pe_P$ and $T\in X$. 
Then $\Phi$ is identity on $X$. 
\end{thm}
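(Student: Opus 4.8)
The plan is to prove the first statement and then observe that the second follows by compressing to $e_P$. Since $\Phi$ is a $\langle M,e_P\rangle$-bimodule map, the key is to exploit that $B(H)^\phi$ is generated (via the Choi--Effros product) by $\langle M,e_P\rangle$ together with the operators $Jz_nJ$, and that these operators satisfy a rigid set of relations with $\langle M,e_P\rangle = (JPJ)'$. More precisely, I would first record that for $z \in P$ we have $JzJ \in \langle M,e_P\rangle$ is central-like with respect to the bimodule structure: $JzJ$ commutes with $\langle M,e_P\rangle$ inside $B(H)$, hence for $T \in X$, $\Phi((JzJ)T) = \Phi(T(JzJ))$ forces $(JzJ)\Phi(T) = \Phi(T)(JzJ)$ only if $JzJ \in \langle M,e_P\rangle$, which it is. The real leverage comes from a Cauchy--Schwarz/multiplicative-domain estimate: for $T \in X$ one computes, as in the proof of Lemma 2.2 and Lemma 3.2,
\begin{align*}
\sum_{n=1}^\infty \bigl\| (Jz_n^*J)(T - \Phi(T)) - (T-\Phi(T))(Jz_n^*J) \bigr\|\text{-type terms}
\end{align*}
and shows the obstruction $T - \Phi(T)$ must commute with all $Jz_nJ$, i.e.\ lie in $(JPJ)' = \langle M,e_P\rangle$; but then, applying $\Phi$ which is identity on $\langle M,e_P\rangle$, one gets $T - \Phi(T) = \Phi(T-\Phi(T)) = 0$.

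To make that estimate work I would use the trace-like state $\omega_0$ and the conditional expectation $E$. The crucial inequality is the standard one for unital completely positive (or here just positive, $2$-positive) maps: writing $\Psi = \Phi$ composed with the Choi--Effros product structure, one has for self-adjoint $T \in X$ a ``Schwarz defect'' $E\bigl((T-\Phi(T))^*(T-\Phi(T))\bigr)$ whose $\omega_0$-value I want to show is zero. Concretely: since $\Phi$ is a $\langle M,e_P\rangle$-bimodule map and $\langle M,e_P\rangle$ lies in the multiplicative domain of $\phi$, the map $T \mapsto \Phi(T)$ intertwines the canonical ``derivation'' built from the $z_n$'s. One then runs the identity
$$
\omega_0\Bigl(\sum_n \bigl(Jz_nJ\,R\,Jz_n^*J\bigr)\Bigr) = \omega_0\bigl(\phi(R)\bigr) = \omega_0(R)
$$
for $R = (T-\Phi(T))^*(T-\Phi(T))$, combined with faithfulness of $\omega_0$ on $e_P B(H)^\phi e_P$ (Lemma 2.1) after compressing by $e_P$, to conclude $e_P(T-\Phi(T))e_P = 0$; then using the bimodule property with $M \subseteq \langle M,e_P\rangle$ and the fact that $e_P$ is cyclic for $\langle M,e_P\rangle$ on $H$ (it generates $\langle M,e_P\rangle$ as a von Neumann algebra together with $M$), one propagates this to $T - \Phi(T) = 0$ on all of $H$.

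For the second statement, I would simply note that $e_P B(H)^\phi e_P$ is again an AFD von Neumann algebra (via the compressed Choi--Effros product), that $e_P \langle M,e_P\rangle e_P = P e_P \cong P$, and that compression by $e_P$ converts the $\langle M,e_P\rangle$-bimodule hypothesis into the $Pe_P$-bimodule hypothesis; moreover $\omega_0$ restricted (and renormalized) is faithful on $e_P B(H)^\phi e_P$ by Lemma 2.1, so the same argument applies verbatim with $M$ replaced by $P$, $H$ by $e_P H = L^2(P)$, and $\langle M,e_P\rangle$ by $Pe_P$. Alternatively one can deduce it directly from the first part by extending a given bimodule map on $X \subseteq e_P B(H)^\phi e_P$ to a bimodule map on $\langle M,e_P\rangle \cdot X \cdot \langle M,e_P\rangle$.

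The main obstacle I anticipate is justifying that the ``Schwarz defect'' estimate can actually be run inside the Choi--Effros von Neumann algebra structure rather than inside honest $B(H)$: the product $S \circ T = E(ST)$ is not the operator product, so the telescoping identity $\sum_n Jz_nJ R Jz_n^*J = \phi(R)$ and the Schwarz inequality $\Phi(T^*)\circ\Phi(T) \le \Phi(T^*\circ T)$ must both be handled with care about which product is in play. The saving grace is precisely the remark in Section~3 that $\langle M,e_P\rangle$ sits in the multiplicative domain and $S \circ z = Sz$, $z \circ S = zS$ for $z \in \langle M,e_P\rangle$; since all the $z_n \in P \subseteq \langle M,e_P\rangle$ (and correspondingly $Jz_nJ \in (JPJ) = $ the commutant side, but one checks $Jz_nJ$ interacts correctly), the relevant products collapse to ordinary ones and the computation goes through. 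Ensuring every step that looks like an ordinary-product manipulation is legitimized this way is the bookkeeping I expect to be delicate but not deep.
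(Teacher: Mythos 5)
Your proposal has genuine gaps at its core steps. First, the identity $\omega_0(\phi(R))=\omega_0(R)$ that you want to apply to the Schwarz defect $R=(T-\Phi(T))^*(T-\Phi(T))$ is false for general $R\in B(H)$: what actually holds is $\omega_0\circ\phi=\omega_0\circ\tilde{\phi}$ (both equal $\sum_n\langle Rz_n\xi_0,z_n\xi_0\rangle$), while $\omega_0\circ\phi=\omega_0$ is available only on $B(H)^{\phi}$ (trivially) or on $JMJ$ (the trace computation in Theorem 2.3), and your $R$ lies in neither. Second, the commutator computation of Lemma 2.2 that you want to imitate requires \emph{both} $S$ and $S^*S$ to be fixed by $\phi$; for $D=T-\Phi(T)$ one has $D\in B(H)^{\phi}$ but not $D^*D\in B(H)^{\phi}$, and the Schwarz inequality $\Phi(T^*)\circ\Phi(T)\leq\Phi(T^*\circ T)$ you invoke needs $\Phi$ to be $2$-positive, whereas the theorem assumes only a unital normal \emph{linear} bimodule map. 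Third, even if you could show $D\in\langle M,e_P\rangle$, your conclusion ``$T-\Phi(T)=\Phi(T-\Phi(T))=0$'' is a non sequitur: since $\Phi$ is the identity on $\langle M,e_P\rangle$, membership of $D$ there only gives $\Phi(D)=D$, which is no information ($\Phi(\Phi(T))$ need not even be defined, as $\Phi(T)$ need not lie in $X$, and nothing forces $\Phi$ to be idempotent). Knowing $T-\Phi(T)\in\langle M,e_P\rangle$ does not by itself force it to vanish.

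The missing idea is the interplay with $\tilde{\phi}$ and Lemma 3.2, which your sketch never uses. The paper's proof uses the bimodule property over $M$ (which contains the $z_n$) to get $\Phi\circ\tilde{\phi}=\tilde{\phi}\circ\Phi$, then writes
$\omega_0(\Phi(T))=\frac{1}{n}\sum_j\omega_0(\phi^j(\Phi(T)))=\frac{1}{n}\sum_j\omega_0(\tilde{\phi}^j(\Phi(T)))=\frac{1}{n}\sum_j\omega_0(\Phi(\tilde{\phi}^j(T)))$
and passes to the limit to obtain $\omega_0(\Phi(\tilde{E}(T)))=\omega_0(\Phi(e_P\tilde{E}(T)e_P))$, using $e_P\xi_0=\xi_0$ and the bimodule property with $e_P\in\langle M,e_P\rangle$. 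Since $\tilde{E}(T)\in B(H)^{\phi}\cap B(H)^{\tilde{\phi}}$, Lemma 3.2 places $e_P\tilde{E}(T)e_P$ inside $\langle M,e_P\rangle$, where $\Phi$ is the identity; running the same computation for $\Phi=id$ and comparing yields $\omega_0(\Phi(T))=\omega_0(T)$, and then the bimodule property over $M$ together with cyclicity of $\xi_0$ gives $\Phi(T)=T$. None of this needs positivity of $\Phi$ or any Schwarz-type estimate. Your proposed route would have to be replaced by (or reduced to) this argument; as written, each of its three main steps fails.
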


\begin{proof} 
We remark that by the assumptions $\Phi=id$ on $\langle M,e_P\rangle$ and 
$\tilde{\phi}\circ\Phi=\Phi\circ\tilde{\phi}$. 
For any $T\in X$, we observe that 
\begin{align*}
\langle
\Phi(T)\xi_0,\xi_0
\rangle&=
\dfrac{1}{n}\sum_{j=1}^n
\langle
\phi^j(\Phi(T))\xi_0,\xi_0
\rangle=\dfrac{1}{n}\sum_{j=1}^n
\langle
\tilde{\phi}^j(\Phi(T))\xi_0,\xi_0
\rangle\\
&=\dfrac{1}{n}\sum_{j=1}^n
\langle
\Phi(\tilde{\phi}^j(T))\xi_0,\xi_0
\rangle=
\langle
\Phi(\tilde{E}(T))\xi_0,\xi_0
\rangle\\
&=\langle
\Phi(\tilde{E}(T))e_P\xi_0,e_P\xi_0
\rangle=\langle
\Phi(e_P\tilde{E}(T)e_P)\xi_0,\xi_0
\rangle
\end{align*}
Since $T\in B(H)^{\phi}$and $\phi\circ\tilde{\phi}=\tilde{\phi}\circ\phi$, we have $\tilde{E}(T)\in 
(B(H)^{\phi})^{\tilde{\phi}}=B(H)^\phi\cap B(H)^{\tilde{\phi}}
$. Then by the previous lemma we see that 
$e_P\tilde{E}(T)e_P\in e_P(B(H)^\phi\cap B(H)^{\tilde{\phi}})e_P\subseteq 
\langle M,e_P\rangle$. Since $\Phi=id$ on $\langle M,e_P\rangle$, we have 
$$
\langle
\Phi(T)\xi_0,\xi_0
\rangle=\langle
e_P\tilde{E}(T)e_P\xi_0,\xi_0
\rangle.
$$
Since the identity map satisfies the same assumption as $\Phi$, we also have 
$$
\langle
T\xi_0,\xi_0
\rangle=\langle
e_P\tilde{E}(T)e_P\xi_0,\xi_0
\rangle
$$ and hence 
$$
\langle
\Phi(T)\xi_0,\xi_0
\rangle=\langle
T\xi_0,\xi_0
\rangle.
$$ 
Since $X$ is a $\langle M,e_P\rangle$-$\langle M,e_P\rangle$ bimodule, 
for any $x,y\in M$ we have $y^*Tx\in X$ and hence 
$$
\langle
\Phi(T)x\xi_0,y\xi_0
\rangle=
\langle
\Phi(y^*Tx)\xi_0,\xi_0
\rangle=\langle
y^*Tx\xi_0,\xi_0
\rangle
=\langle
Tx\xi_0,y\xi_0
\rangle.
$$
Therefore we conclude $\Phi(T)=T$. 

The rest of the statement can be shown by the same way. 
\end{proof}

\begin{prop}
Fix finite elements $\phi_1,\phi_2,\cdots,\phi_N\in G$ such that $\phi_i\circ\phi_j=\phi_j\circ\phi_i$. 
For any positive numbers $\{\lambda_j\}_{j=1}^N$ with 
$\sum_{j=1}^N\lambda_j=1$, we set $\phi=\sum_{j=1}^N \lambda_j\phi_j$. Then we have 
$\bigcap_{j=1}^N B(H)^{\phi_j}=B(H)^\phi$.
\end{prop}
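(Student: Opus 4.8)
The inclusion $\bigcap_{j=1}^N B(H)^{\phi_j}\subseteq B(H)^\phi$ is trivial: if $\phi_j(T)=T$ for all $j$, then $\phi(T)=\sum_j\lambda_j\phi_j(T)=\sum_j\lambda_j T=T$. So the real content is the reverse inclusion, and the natural approach is a convexity/strict-positivity argument exploiting that each $\lambda_j>0$ and that the $\phi_j$ are unital completely positive, hence contractive, with respect to a suitable inner product.

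The plan is to fix the faithful tracial state $\tau=\omega_0$ and observe, as noted in the excerpt, that $\omega_0\circ\phi_j=\omega_0$ for each $j$ (the computation $\tau(\phi_j(JxJ))=\tau(JxJ)$ in the proof of Theorem 2.3 applies to each $\phi_j$, and more generally $\omega_0\circ\phi_j=\omega_0\circ\tilde\phi_j=\omega_0$ on all of $B(H)$). Consequently $\omega_0$ is $\phi$-invariant as well. Now take $T\in B(H)^\phi$; without loss of generality $T=T^*$. Consider the quantity $\omega_0\big((T-\phi_j(T))^*(T-\phi_j(T))\big)$ summed against the weights $\lambda_j$. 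The key computation is to expand, using Kadison's inequality $\phi_j(T)^*\phi_j(T)\le\phi_j(T^*T)$ (valid since $\phi_j$ is unital completely positive) and $\phi_j$-invariance of $\omega_0$:
\begin{align*}
\sum_{j=1}^N\lambda_j\,\omega_0\big((T-\phi_j(T))^*(T-\phi_j(T))\big)
&=\sum_{j=1}^N\lambda_j\Big(\omega_0(T^*T)-\omega_0(T^*\phi_j(T))-\omega_0(\phi_j(T)^*T)+\omega_0(\phi_j(T)^*\phi_j(T))\Big)\\
&\le \sum_{j=1}^N\lambda_j\Big(2\,\omega_0(T^*T)-\omega_0(T^*\phi_j(T))-\omega_0(\phi_j(T)^*T)\Big)\\
&=2\,\omega_0(T^*T)-\omega_0(T^*\phi(T))-\omega_0(\phi(T)^*T)=2\,\omega_0(T^*T)-2\,\omega_0(T^*T)=0,
\end{align*}
where the last line uses $\phi(T)=T$. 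Since each $\lambda_j>0$ and each term $\omega_0\big((T-\phi_j(T))^*(T-\phi_j(T))\big)\ge 0$, every term must vanish; that is, $\omega_0\big((T-\phi_j(T))^*(T-\phi_j(T))\big)=0$ for all $j$.

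From here I would bootstrap to the operator identity $\phi_j(T)=T$. The vanishing above only says $T\xi_0=\phi_j(T)\xi_0$ — that the vectors agree after applying to the cyclic trace vector. To upgrade this to an operator equation, I would apply the argument not to $T$ itself but to $x^*Ty$ for $x,y\in M$: since $M\subseteq B(H)^{\phi_j}$ (as $M$ lies in the multiplicative domain and is fixed pointwise by $\phi_j$), we have $x^*Ty\in B(H)^\phi$ whenever $T\in B(H)^\phi$, and $\phi_j(x^*Ty)=x^*\phi_j(T)y$. The same computation then gives $\omega_0\big((x^*Ty-x^*\phi_j(T)y)^*(\cdots)\big)=0$, i.e. $\langle\phi_j(T)y\xi_0,x\xi_0\rangle=\langle Ty\xi_0,x\xi_0\rangle$ for all $x,y\in M$; since $M\xi_0$ is dense in $H$, this forces $\phi_j(T)=T$. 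Thus $T\in\bigcap_{j=1}^N B(H)^{\phi_j}$, completing the proof.

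I expect no serious obstacle here: the argument is a standard "averaging a sum of positive defects forces each to vanish" trick, and the only point requiring a little care is the passage from $T\xi_0=\phi_j(T)\xi_0$ to $T=\phi_j(T)$, handled by the bimodule trick above — which is exactly the maneuver already used at the end of the proof of Theorem 3.3. One should also double-check that $T=T^*$ may be assumed (decompose a general $T$ into real and imaginary parts, each of which is again in $B(H)^\phi$ since $\phi$ is $*$-preserving), but this is routine.
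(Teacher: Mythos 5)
The decisive step of your argument is the claim that $\omega_0\circ\phi_j=\omega_0$ on all of $B(H)$, which you need for the chain $\omega_0(\phi_j(T)^*\phi_j(T))\le\omega_0(\phi_j(T^*T))=\omega_0(T^*T)$. This claim is false. What the paper establishes is only that $\omega_0\circ\phi=\omega_0\circ\tilde{\phi}$ on $B(H)$, and that $\phi$ preserves the trace of $M'=JMJ$; neither gives invariance of the vector state $\omega_0$ on $B(H)$. Concretely, $\omega_0(\phi_j(T))=\sum_n\langle Tz_n^{(j)}\xi_0,z_n^{(j)}\xi_0\rangle$, and taking $T$ to be the rank-one projection onto ${\Bbb C}\xi_0$ gives $\sum_n|\tau(z_n^{(j)})|^2$, which equals $1=\omega_0(T)$ only when every $z_n^{(j)}$ is a scalar (in the free-group example of Section 4 it equals $0$). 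In fact, if $\phi$ admitted a suitably faithful normal invariant state, the standard Kadison--Schwarz argument would force $B(H)^{\phi}$ to be a von Neumann subalgebra of $B(H)$, contradicting the whole setting of the paper, where $B(H)^{\phi}$ in general strictly contains its maximal $*$-subalgebra $\langle M,e_P\rangle$ (Lemma 2.2). So the ``sum of nonnegative defects is $\le 0$'' computation collapses at its key inequality, and the bootstrap to $x^*Ty$ inherits the same gap.

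A further warning sign is that your argument nowhere uses the hypothesis $\phi_i\circ\phi_j=\phi_j\circ\phi_i$, while the paper's proof needs it essentially: commutativity guarantees $\phi_j(B(H)^{\phi})\subseteq B(H)^{\phi}$, so that $\Phi=\phi_j|_{B(H)^{\phi}}$ is a unital normal $\langle M,e_P\rangle$-bimodule map to which the rigidity Theorem 3.3 applies, where $P$ is the von Neumann algebra generated by all the $z_n^{(j)}$, so that $\langle M,e_P\rangle=\{Jz_n^{(j)}J\}_{n,j}'\subseteq\bigcap_{j}B(H)^{\phi_j}$ and each $\phi_j$ is the identity there. That rigidity argument---which runs through the expectation $\tilde{E}$, Lemma 3.2, and the compression by $e_P$ where $\omega_0$ \emph{is} faithful (Lemma 2.1)---is precisely the substitute for the invariant state your proof assumes; the argument needs to be routed through it.
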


\begin{proof} 
There exists elements 
$z_n^{(j)}\in M$ such that $\phi_j=\sum_{n=1}^\infty{\rm Ad}Jz_n^{(j)}J$, 
$\sum_{n=1}^\infty z_n^{(j)}{z_n^{(j)}}^*=\sum_{n=1}^\infty {z_n^{(j)}}^*z_n^{(j)}=1$ and 
${z_n^{(j)}}^*=z_k^{(j)}$ for some $k$. 
Since $\phi=\sum_{n,j}{\rm Ad}J(\sqrt{\lambda_j}z_n^{(j)})J$, we have $\phi\in G$. 
Let $P$ be a von Neumann algebra generated by $\{\sqrt{\lambda_j}z_n^{(j)})\}_{n,j}$. Then 
$P$ is generated by $\{z_n^{(j)}\}_{n,j}$. By theorem 3.3, the inclusion 
$\langle M,e_P\rangle\subseteq B(H)^\phi$ has the rigidity. Since 
$\langle M,e_P\rangle=(JPJ)'=\{Jz_n^{(j)}J\}_{n,j}'\subseteq \bigcap_{j=1}^N B(H)^{\phi_j}$, 
each map $\phi_j$ is identity on $\langle M,e_P\rangle$. Then 
since $\phi_j(B(H)^\phi)\subseteq B(H)^\phi$, 
by the rigidity  $\phi_j$ is 
also identity on $B(H)^\phi$. Therefore we conclude that $\bigcap_{j=1}^N B(H)^{\phi_j}=B(H)^\phi$. 

\end{proof}

\begin{prop}
Let $\phi=\sum_{n=1}^\infty{\rm Ad} Jz_nJ$ and $\psi=\sum_{n=1}^\infty {\rm Ad}Jw_nJ$ be 
elements in $G$ such that $z_nw_m=w_mz_n$. We furhter assume that 
$\sum_j z_{n_j}=1$ for some $n_1<n_2<\cdots$.  
Then we have 
$B(H)^\phi\cap B(H)^\psi=B(H)^{\phi\psi}$. 
\end{prop}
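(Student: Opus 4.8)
The plan is to reduce the statement $B(H)^\phi\cap B(H)^\psi=B(H)^{\phi\psi}$ to an application of Proposition 3.4, using the extra hypothesis that the $z_n$ and $w_m$ commute (so that $\phi\psi=\psi\phi$) and the assumption $\sum_j z_{n_j}=1$. First I would observe that the inclusion $B(H)^\phi\cap B(H)^\psi\subseteq B(H)^{\phi\psi}$ is immediate: if $T$ is fixed by both $\phi$ and $\psi$, then $\phi\psi(T)=\phi(T)=T$. The work is in the reverse inclusion, and the natural idea is to write $\phi\psi$ as a \emph{convex} combination of commuting maps in $G$ and invoke Proposition 3.4, which identifies the fixed-point space of a convex combination with the intersection of the individual fixed-point spaces.

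The key algebraic step is to realize $\phi\psi$ as an average. Since the $z_n$ commute with the $w_m$, we have $\phi\psi=\sum_{n,m}{\rm Ad}\,J(z_nw_m)J$, and $\{z_nw_m\}_{n,m}$ is a Kraus family in $\frak S$ (its $*$-closure and the two sum conditions follow from those for $\{z_n\}$ and $\{w_m\}$ together with commutativity). Now I would use $\sum_j z_{n_j}=1$: writing $p_j=z_{n_j}$, these form a partition-of-unity-type family, and one expects $\phi=\sum_j \phi_j$ where each $\phi_j$ is built from a single ``block'' of the Kraus family so that $\phi_j\in G$ and the $\phi_j$ commute with $\psi$ and with each other. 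More concretely, the relation $\sum_j z_{n_j}=1$ should let me rewrite each $z_n$ as $z_n=\sum_j z_{n_j}z_n$ (using that the $z_{n_j}$ are, after the partition, orthogonal-like pieces), regroup the Kraus family $\{z_n w_m\}$ accordingly, and thereby exhibit $\phi\psi$ as $\sum_j \lambda_j \psi_j$ with $\psi_j\in G$, $\psi_j\circ\psi_k=\psi_k\circ\psi_j$, and with each $B(H)^{\psi_j}$ containing $B(H)^\phi\cap B(H)^\psi$; in fact the design should force $\bigcap_j B(H)^{\psi_j}=B(H)^\phi\cap B(H)^\psi$. Then Proposition 3.4 gives $B(H)^{\phi\psi}=\bigcap_j B(H)^{\psi_j}=B(H)^\phi\cap B(H)^\psi$, as desired.

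An alternative, perhaps cleaner route avoiding the explicit regrouping: show directly that $\phi$ and $\psi$ both preserve $B(H)^{\phi\psi}$ and both act as the identity on $\langle M,e_Q\rangle$, where $Q$ is the von Neumann algebra generated by $\{z_nw_m\}_{n,m}$, and then invoke the rigidity Theorem 3.3 exactly as in the proof of Proposition 3.4. The point is that $Q\subseteq P_\phi\cap P_\psi$ where $P_\phi,P_\psi$ are generated by the $z_n$'s and $w_m$'s respectively, so $\langle M,e_Q\rangle\supseteq\langle M,e_{P_\phi}\rangle\cap\langle M,e_{P_\psi}\rangle$... but one needs the reverse containment for the rigidity argument to apply to $\phi,\psi$ individually. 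This is where the hypothesis $\sum_j z_{n_j}=1$ must enter: it should guarantee that $\langle M,e_Q\rangle$ is small enough (equivalently $Q$ is large enough) that $\phi|_{\langle M,e_Q\rangle}=\mathrm{id}$, i.e. that $\langle M,e_Q\rangle\subseteq (JP_\phi J)'=B(H)^\phi$ — here the subsequence condition presumably ensures $P_\phi\subseteq Q$ (each $z_n$ lies in the algebra generated by the $z_nw_m$ because $\sum_j z_{n_j}=1$ lets one recover $z_n=\sum_j (z_{n_j}w_{?})\cdots$).

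The main obstacle I expect is precisely extracting, from the single scalar relation $\sum_j z_{n_j}=1$, the structural fact that $P_\phi$ (the algebra generated by the $z_n$) is contained in $Q$ (the algebra generated by $\{z_nw_m\}$), or equivalently that $\phi$ is the identity on $\langle M,e_Q\rangle$; without this, the rigidity theorem cannot be bootstrapped from $\langle M,e_Q\rangle$ to $B(H)^{\phi\psi}$. Once that containment is in hand, both the direct Proposition-3.4-style argument and the regrouping argument go through routinely, and the two inclusions combine to give the stated equality.
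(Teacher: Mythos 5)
Your second route is the paper's argument, but the step you flag as ``the main obstacle'' --- showing that the von Neumann algebra $Q$ generated by $\{z_nw_m\}_{n,m}$ contains every $z_n$ (and every $w_m$) --- is left open, and it is exactly the point where the hypothesis $\sum_j z_{n_j}=1$ enters. The computation is short but must be done in the right order: first recover the $w_m$'s, since $w_m=\bigl(\sum_j z_{n_j}\bigr)w_m=\sum_j z_{n_j}w_m$ is a sum of generators of $Q$, so $w_m\in Q$ and hence $w_m^*\in Q$; then recover the $z_n$'s, since $z_n=z_n\sum_m w_mw_m^*=\sum_m (z_nw_m)w_m^*\in Q$. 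Thus $Q=P$, where $P$ is generated by $\{z_n\}_n\cup\{w_m\}_m$, so $\langle M,e_Q\rangle=(JQJ)'=(JPJ)'\subseteq\{Jz_nJ\}_n'\cap\{Jw_mJ\}_m'$, and therefore $\phi$ and $\psi$ are both the identity on $\langle M,e_Q\rangle$. Since $\phi\psi\in G$ with Kraus family $\{z_nw_m\}$ (the $*$-closedness and the two sum conditions follow from those of $\{z_n\}$, $\{w_m\}$ and commutativity), Theorem 3.3 applies to $\langle M,e_Q\rangle\subseteq B(H)^{\phi\psi}$; as $\phi$ and $\psi$ each preserve $B(H)^{\phi\psi}$ (they commute with $\phi\psi$), rigidity gives $\phi=\psi=\mathrm{id}$ on $B(H)^{\phi\psi}$, hence $B(H)^{\phi\psi}\subseteq B(H)^\phi\cap B(H)^\psi$, the reverse inclusion being trivial as you note. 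Your formula sketch $z_n=\sum_j(z_{n_j}w_{?})\cdots$ tries to recover $z_n$ directly from the relation $\sum_j z_{n_j}=1$, which does not work; you need the $w_m$'s in $Q$ first.

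Your first route (realizing $\phi\psi$ as a convex combination and invoking Proposition 3.4) does not correspond to the paper's proof and, as written, is not an argument: $\phi\psi$ is a \emph{composition}, not a convex combination, of elements of $G$, and the proposed regrouping of $\{z_nw_m\}$ into $\sum_j\lambda_j\psi_j$ with the required properties is never exhibited --- every step there is conditional (``one expects'', ``should force''). I would drop it entirely and keep only the rigidity route, completed as above. Note also that the paper (Corollary 4.12) shows the conclusion can fail when the hypothesis $\sum_j z_{n_j}=1$ is dropped, so any correct proof must use that hypothesis in an essential, verifiable way; in the rigidity route its role is precisely the identity $Q=P$ above.
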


\begin{proof}
Let $Q$ be a von Neumann algebra generated by $\{z_nw_m\}_{n,m=1}^\infty$ and 
let $P$ be a von Neumann algebra generated by $\{z_n\}_{n=1}^\infty\cup\{w_m\}_{m=1}^\infty$. 
Then we have the following inclusions. 
$$
\begin{matrix}
Q&\subseteq& P&\subseteq &M&
\subseteq
&\langle 
M,e_P
\rangle&
\subseteq
&\langle 
M,e_Q
\rangle\\
 & & & & & &\cap& &\cap\\
 & & & & & & 
B(H)^{\phi}\cap B(H)^\psi&\subseteq &B(H)^{\phi\psi}
\end{matrix}. 
$$ 
We claim that $Q=P$. Since$\sum_{j}z_{n_j}=1$, 
we see that $w_m=\sum_{j}z_{n_j}w_m\in Q$. Then we have 
$
Q\ni\sum_{m=1}^\infty(z_nw_m)w_m^*=z_n
$ and hence $Q=P$. Then we also have 
$
\langle 
M,e_P
\rangle
=\langle 
M,e_Q
\rangle
$. Since $\phi\psi\in G$, the inclusion 
$\langle 
M,e_P
\rangle
=\langle 
M,e_Q
\rangle\subset B(H)^{\phi\psi}$ 
has the rigidity in the sense of theorem 3.3. Since 
$\phi(B(H)^{\phi\psi})\subseteq B(H)^{\phi\psi}$ 
and $\phi=id$ on $\langle 
M,e_P
\rangle$, by the rigidity we get $\phi=id$ on $B(H)^{\phi\psi}$. 
By the same reason we also obtain that  $\psi=id$ on $B(H)^{\phi\psi}$ 
and hence $B(H)^\phi\cap B(H)^\psi=B(H)^{\phi\psi}$.

\end{proof}

In general, $B(H)^\phi\cap B(H)^\psi\not=B(H)^{\phi\psi}$ 
even if $\phi,\psi\in G$ and $\phi\circ\psi=\psi\circ\phi$. 
See corollary 4.12.

\begin{cor}
Let $\phi=\sum_{n=1}^\infty {\rm Ad}Jz_nJ$ and $\psi=\sum_{n=1}^\infty {\rm Ad}Jw_nJ$ be 
elements in $G$ such that $z_nw_m=w_mz_n$. 
Then we have 
$B(H)^{\phi^2}\cap B(H)^\psi=B(H)^{\phi^2\psi}$. 
\end{cor}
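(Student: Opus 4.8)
The plan is to obtain this as an immediate consequence of Proposition 3.6, by regarding $\phi^2$ itself as a single element of $G$. First I would write $\phi^2=\sum_{n,m=1}^\infty {\rm Ad}\,J(z_nz_m)J$ and check that the doubly-indexed family $\{z_nz_m\}_{n,m}$, re-enumerated as a single sequence, belongs to ${\frak S}$. Indeed, $\sum_{n,m}(z_nz_m)(z_nz_m)^*=\sum_n z_n\bigl(\sum_m z_mz_m^*\bigr)z_n^*=\sum_n z_nz_n^*=1$, and symmetrically $\sum_{n,m}(z_nz_m)^*(z_nz_m)=1$; moreover $\{z_nz_m\}_{n,m}$ is $*$-closed since $(z_nz_m)^*=z_m^*z_n^*$ and each $z_i^*$ again lies in $\{z_j\}_j$. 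Hence $\phi^2\in G$, and its coefficient von Neumann algebra is contained in $P$, the von Neumann algebra generated by $\{z_n\}_n$.

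Next I would verify the two hypotheses of Proposition 3.6 for the pair $(\phi^2,\psi)$. The commutation of coefficients holds: from $z_iw_\ell=w_\ell z_i$ for all $i,\ell$ we get $(z_nz_m)w_\ell=w_\ell(z_nz_m)$. For the subsequence hypothesis I use that $\{z_n\}_n$ is $*$-closed: writing $z_n^*=z_{k(n)}$, the identity $\sum_n z_nz_n^*=1$ becomes $\sum_n z_nz_{k(n)}=1$, and each term $z_nz_{k(n)}$ is one of the coefficients of $\phi^2$. Since $n\mapsto(n,k(n))$ is injective, the pairs $(n,k(n))$ form an infinite subset of the index set of the coefficient sequence of $\phi^2$; listing them in increasing order after the re-enumeration, we obtain exactly a sub-sum equal to $1$, as required in Proposition 3.6.

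With both hypotheses in hand, Proposition 3.6 applied to $\phi^2$ (in place of $\phi$) and $\psi$ gives $B(H)^{\phi^2}\cap B(H)^\psi=B(H)^{\phi^2\psi}$, which is the assertion of the corollary.

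The only point needing a little care is the bookkeeping involved in passing from the doubly-indexed family $\{z_nz_m\}$ to a single sequence so that Proposition 3.6 applies verbatim, together with the key observation that $*$-closedness of $\{z_n\}$ is precisely what turns the identity $\sum_n z_nz_n^*=1$ into a sub-sum $\sum_n z_nz_{k(n)}=1$ built from products of the $z_i$'s; everything else is a direct invocation of the previous proposition.
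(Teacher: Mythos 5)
Your proof is correct and follows essentially the same route as the paper: the paper likewise notes that $\phi^2=\sum_{m,n}{\rm Ad}\,Jz_mz_nJ$ and that $*$-closedness gives the sub-sum $\sum_n z_nz_{k(n)}=\sum_n z_nz_n^*=1$, so that the previous proposition applies to the pair $(\phi^2,\psi)$. Your additional bookkeeping about re-enumerating the doubly-indexed family is a harmless elaboration of the same argument.
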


\begin{proof} 
By the definition of $G$, for each $n$ there exists $k(n)$ such that 
$z_n^*=z_{k(n)}$. 
Since $\phi^2=\sum_{m,n=1}^\infty{\rm Ad} Jz_mz_nJ$ and $\sum_{n=1}^\infty 
z_nz_{k(n)}=\sum_{n=1}^\infty 
z_nz_n^*=1
$, the pair $\phi^2$ and $\psi$ satisfy the assumptions in the previous proposition. 
Therefore we have $B(H)^{\phi^2}\cap B(H)^\psi=B(H)^{\phi^2\psi}$. 
\end{proof}

\begin{lem}
If $P$ is a factor, then 
$e_P(B(H)^\phi\cap B(H)^{\tilde{\phi}})e_P=
{\Bbb C}e_P$. 
\end{lem}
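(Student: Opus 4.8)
The plan is to leverage Lemma 3.6 together with the factoriality input on $P$. By Lemma 3.6 we already know
$e_P(B(H)^\phi\cap B(H)^{\tilde\phi})e_P\subseteq\langle M,e_P\rangle\cap\langle M',e_P\rangle$, so the whole task reduces to computing this intersection of two basic extensions and checking it collapses to $\mathbb{C}e_P$ when $P$ is a factor. First I would cut everything down by $e_P$: recall that $e_P\langle M,e_P\rangle e_P=Pe_P$ and, symmetrically, $e_P\langle M',e_P\rangle e_P=JPJe_P$ (this is the standard identification of the compressed basic extension, using $Je_PJ=e_P$ and $e_P x e_P=E_P(x)e_P$ for $x\in M$, and the analogous statement on the commutant side). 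Hence an element of $e_P(B(H)^\phi\cap B(H)^{\tilde\phi})e_P$ lies simultaneously in $Pe_P$ and in $JPJe_P$.

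The key step is then the observation that, under the identification $e_PHe_P\cong L^2(P)$, the algebra $Pe_P$ acts as left multiplication by $P$ and $JPJe_P$ acts as right multiplication by $P$ (i.e. as $J_PPJ_P$, where $J_P$ is the modular conjugation for $P$ on $L^2(P)$, which is the restriction of $J$). Therefore $Pe_P\cap JPJe_P$ corresponds to $P\cap P'=Z(P)$ acting on $L^2(P)$. Since $P$ is assumed to be a factor, $Z(P)=\mathbb{C}$, so the intersection is $\mathbb{C}e_P$, which is exactly the claim. I would write this out as: take a projection (or a self-adjoint element) $T$ in the intersection; realize it as $ae_P$ with $a\in P$ from the first containment and as $Jb Je_P$ with $b\in P$ from the second; comparing the two on the cyclic vector $\xi_0$ (or using that $e_P$ is a minimal projection of $\langle M,e_P\rangle$ in the appropriate sense is not needed — a direct multiplication-operator comparison suffices) forces $a$ to be central in $P$, hence scalar.

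The main obstacle I anticipate is bookkeeping rather than conceptual: carefully justifying the identifications $e_P\langle M,e_P\rangle e_P=Pe_P$ and $e_P\langle M',e_P\rangle e_P=JPJe_P$ and that the Choi–Effros product on $e_P(B(H)^\phi\cap B(H)^{\tilde\phi})e_P$ agrees with the ambient product there (so that ``projection'' and ``algebra'' mean what one expects) — but this is already implicit in the setup of Section 3 and in Lemma 3.6's proof, so it can be invoked. One should also note that $e_P(B(H)^\phi\cap B(H)^{\tilde\phi})e_P$ is a von Neumann algebra under the Choi–Effros product (shown inside the proof of Lemma 3.6), so it is generated by its projections and it suffices to treat the self-adjoint/projection case. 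Modulo these remarks, the result follows immediately from Lemma 3.6 and $Z(P)=\mathbb{C}$.
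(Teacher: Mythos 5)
Your proposal is correct and follows essentially the same route as the paper: the paper's proof is precisely the chain $e_P(B(H)^\phi\cap B(H)^{\tilde\phi})e_P\subseteq e_P\langle M,e_P\rangle e_P\cap e_P\langle M',e_P\rangle e_P=Pe_P\cap JPJe_P=Z(P)e_P={\Bbb C}e_P$, invoked in one line from the preceding containment lemma. Your extra bookkeeping about the compressions and the Choi--Effros product is harmless but not needed beyond what the paper already records.
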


\begin{proof}
By lemma 3.2, we have 
$e_P(B(H)^\phi\cap B(H)^{\tilde{\phi}})e_P\subseteq 
e_P\langle M,e_P\rangle
\cap \langle M',e_P\rangle e_P=Pe_P\cap JPJe_P=
Z(P)e_P={\Bbb C}e_P
$.
\end{proof}

Recall that if $P$ is a factor, 
then by Proposition 3.1 $B(H)^{\phi}$ is also a factor. 

\begin{thm}
Assume that $P$ is a factor. If $B(H)^{\phi}$ is a 
finite factor, then $B(H)^{\phi}=\langle 
M,e_P
\rangle$. 
If $B(H)^{\phi}$ is a 
${\rm II}_\infty$-factor, 
then $e_PB(H)^{\phi}e_P=Pe_P$.  

\end{thm}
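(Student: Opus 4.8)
The plan is to prove both statements by the same device: construct a normal conditional expectation from $B(H)^{\phi}$ (respectively from the corner $e_{P}B(H)^{\phi}e_{P}$) onto the algebra on the right-hand side, observe that it is automatically a bimodule map over $\langle M,e_{P}\rangle$ (respectively over $Pe_{P}$), and invoke the rigidity of Theorem 3.3 to conclude that it is the identity.

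For the first assertion: if $B(H)^{\phi}$ is a finite factor it carries a faithful normal tracial state $\tau'$, and since $\langle M,e_{P}\rangle$ is a von Neumann subalgebra of $B(H)^{\phi}$ (recall that on $\langle M,e_{P}\rangle$ the Choi--Effros product is the ordinary one), there is a $\tau'$-preserving normal conditional expectation $\mathcal{E}\colon B(H)^{\phi}\to\langle M,e_{P}\rangle$. Being a conditional expectation, $\mathcal{E}$ is unital, normal and satisfies $\mathcal{E}(xTy)=x\mathcal{E}(T)y$ for $x,y\in\langle M,e_{P}\rangle$ and $T\in B(H)^{\phi}$; hence Theorem 3.3, applied with $X=B(H)^{\phi}$, forces $\mathcal{E}=\mathrm{id}$, so that $B(H)^{\phi}=\mathcal{E}(B(H)^{\phi})=\langle M,e_{P}\rangle$.

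For the second assertion: assume $B(H)^{\phi}$ is a $\mathrm{II}_{\infty}$-factor. The corner $e_{P}B(H)^{\phi}e_{P}$ is again a factor (carrying the faithful normal state $\omega_{0}$ by Lemma 2.1), and it contains the unital subfactor $Pe_{P}=e_{P}\langle M,e_{P}\rangle e_{P}\cong P$, again with ordinary product. The crucial point is that $e_{P}$ is a \emph{finite} projection of $B(H)^{\phi}$. Granting this, $e_{P}B(H)^{\phi}e_{P}$ is a finite (hence $\mathrm{II}_{1}$) factor, so there is a trace-preserving normal conditional expectation $\mathcal{F}\colon e_{P}B(H)^{\phi}e_{P}\to Pe_{P}$; exactly as before $\mathcal{F}$ is unital, normal and $Pe_{P}$-bimodular, and the second part of Theorem 3.3 (with $X=e_{P}B(H)^{\phi}e_{P}$) gives $\mathcal{F}=\mathrm{id}$, that is, $e_{P}B(H)^{\phi}e_{P}=Pe_{P}$.

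It remains to check that $e_{P}$ is finite in $B(H)^{\phi}$, and this is where the $\mathrm{II}_{\infty}$ hypothesis is actually used; I expect this to be the main obstacle. The intended route: since $P$ is a factor, $\langle M,e_{P}\rangle=(JPJ)'$ has centre $J(P'\cap P)J=\mathbb{C}$, so it is a semifinite factor whose canonical basic-construction trace $\mathrm{Tr}_{0}$ satisfies $\mathrm{Tr}_{0}(e_{P})=1$. Let $\mathrm{Tr}$ be the faithful normal semifinite trace of $B(H)^{\phi}$. One first observes that, because $z_{n}\in\langle M,e_{P}\rangle$ lies in the multiplicative domain of $\phi$, one has $\tilde{\phi}(a)=\sum_{n}z_{n}^{*}\circ a\circ z_{n}$ for $a\in B(H)^{\phi}$ with $\sum_{n}z_{n}\circ z_{n}^{*}=\sum_{n}z_{n}^{*}\circ z_{n}=1$, so $\mathrm{Tr}\circ\tilde{\phi}=\mathrm{Tr}$; moreover $\tilde{\phi}$ restricted to $e_{P}B(H)^{\phi}e_{P}$ is ergodic, since by Lemma 3.7 (together with the argument in the proof of Theorem 3.3) $\tilde{E}$ maps $e_{P}B(H)^{\phi}e_{P}$ onto $\mathbb{C}e_{P}$, and $\omega_{0}$ is a faithful normal $\tilde{\phi}$-invariant state there by Lemma 2.1. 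Combining the invariance of $\mathrm{Tr}$ under $\tilde{\phi}$ with this ergodicity and the faithfulness of $\omega_{0}$, one shows that the restriction of $\mathrm{Tr}$ to $\langle M,e_{P}\rangle$ is a nonzero normal trace which is not identically $+\infty$ on nonzero positive elements; by uniqueness of the semifinite trace on the semifinite factor $\langle M,e_{P}\rangle$ it is then a positive multiple of $\mathrm{Tr}_{0}$, whence $\mathrm{Tr}(e_{P})<\infty$ and $e_{P}$ is finite. The two reductions to Theorem 3.3 are routine; the genuine work is concentrated in this last finiteness statement, which must exploit the concrete structure of $\phi$ and $\tilde{\phi}$ rather than general von Neumann algebra facts.
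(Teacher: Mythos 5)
Your proposal is correct and follows essentially the same route as the paper: the finite case is the identical appeal to a trace-preserving expectation plus Theorem 3.3, and in the ${\rm II}_\infty$ case you correctly isolate the finiteness of $e_P$ as the crux and assemble exactly the paper's ingredients ($\mathrm{Tr}\circ\tilde{\phi}=\mathrm{Tr}$, Lemma 3.7, the identity $\langle T\xi_0,\xi_0\rangle=\langle e_P\tilde{E}(T)e_P\xi_0,\xi_0\rangle$ from the proof of Theorem 3.3, and the faithfulness of $\omega_0$ from Lemma 2.1). The only (harmless) divergence is at the very end: the paper concludes directly from $\infty>\mathrm{Tr}(T)\geq \mathrm{Tr}(e_P\tilde{E}(T)e_P)=\lambda\,\mathrm{Tr}(e_P)$ with $\lambda>0$ for a suitably chosen $T$, whereas you detour through semifiniteness of $\mathrm{Tr}|_{\langle M,e_P\rangle}$ and uniqueness of the trace on that factor --- to make that detour airtight you still need to exhibit the element $\lambda e_P$ with $0<\lambda$, which is precisely the paper's chain of estimates.
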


\begin{proof}
If $B(H)^{\phi}$ is a 
finite factor, then there exists a trace-preserving conditional expectation 
$F$ from 
$B(H)^{\phi}$ onto $\langle M,e_P\rangle$. 
Then by theorem 3.3, we must have $F=id$ and hence 
$B(H)^{\phi}=\langle 
M,e_P
\rangle$. 

If $B(H)^{\phi}$ is a 
${\rm II}_\infty$-factor, then there exists a faithful normal semifinite tracial 
weight $Tr$. For any positive element $T\in B(H)^{\phi}$ 
with 
$0<Tr(T)<\infty$, we see that 
$$
Tr(T)=\dfrac{1}{n}\sum_{k=1}^n
Tr(\tilde{\phi}^k(T)).
$$ 
Take an increasing sequence of finite projections $\{P_j\}_{j=1}^\infty
\subseteq B(H)^{\phi}
$ with $\lim_{j\rightarrow\infty}P_j=1$. Then we have 
$$
\infty>Tr(T)=\dfrac{1}{n}\sum_{k=1}^n
Tr(\tilde{\phi}^k(T))\geq 
\dfrac{1}{n}\sum_{k=1}^n
Tr(\tilde{\phi}^k(T)P_j).
$$
First tending $n\rightarrow\omega$, after that tending $j\rightarrow \infty$, we have 
$$
\infty>Tr(T)\geq Tr(\tilde{E}(T))\geq Tr(e_P\tilde{E}(T)e_P).
$$
Since $T\in B(H)^{\phi}$, by the proof of theorem 3.3 
we have 
$$
\langle
T\xi_0,\xi_0
\rangle=\langle
e_P\tilde{E}(T)e_P\xi_0,\xi_0
\rangle.
$$ 
Since $Tr$ is semifinite, we can take $T$ with $e_PTe_P\not=0$. Then 
since $\omega_0$ is faithful on $e_PB(H)^{\phi}e_P$, we get 
$$
0\not=\omega_0(e_PTe_P)=\langle
T\xi_0,\xi_0
\rangle=\langle
e_P\tilde{E}(T)e_P\xi_0,\xi_0
\rangle.
$$
In particular we obtain $e_P\tilde{E}(T)e_P\not=0$. 
Since by lemma 3.7 
$$e_P\tilde{E}(T)e_P\in 
e_P(B(H)^\phi\cap B(H)^{\tilde{\phi}})e_P=
{\Bbb C}e_P,
$$ we have $e_P\tilde{E}(T)e_P=\lambda e_P$ for some number $\lambda>0$. 
Then we see that 
$$
\infty>Tr(T)\geq Tr(\tilde{E}(T))\geq Tr(e_P\tilde{E}(T)e_P)=
\lambda Tr(e_P).
$$ 
This means that $e_P$ is a finite projection in $B(H)^{\phi}$ and hence 
$e_PB(H)^{\phi}e_P$ is a finite factor. Then there exists a 
trace-preserving conditional expectation from $e_PB(H)^{\phi}e_P$ 
onto $e_P\langle M,e_P\rangle e_P=Pe_P$. Then by theorem 3.3, we conclude 
$e_PB(H)^{\phi}e_P=Pe_P$.  
\end{proof}

\begin{cor}
If $P$ is a non-AFD factor, then $B(H)^{\phi}$ is an AFD type 
${\rm III}$-factor.

\end{cor}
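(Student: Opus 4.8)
\emph{Approach.} Recall from the discussion opening Section~3 that $B(H)^{\phi}$ is always AFD (with the Choi--Effros product), and from Proposition~3.1 that it is a factor since $P$ is a factor. So it suffices to show that $B(H)^{\phi}$ is not semifinite. I would assume, for contradiction, that $B(H)^{\phi}$ is semifinite and split into the finite and the properly infinite cases, deriving in each case that $P$ would be AFD.

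\emph{The finite case.} If $B(H)^{\phi}$ is a finite factor, then by Theorem~3.8 we have $B(H)^{\phi}=\langle M,e_P\rangle$, which contains $M$. A finite factor carries a faithful normal tracial state, hence admits a trace-preserving conditional expectation onto its von Neumann subalgebra $M$; since $B(H)^{\phi}$ is AFD (hence injective) and injectivity passes to the range of a conditional expectation, $M$ is AFD. Now $P$ is a von Neumann subalgebra of the finite AFD factor $M$ (note $1_P=1_M$ because $\sum_n z_n^*z_n=1$), so again there is a trace-preserving conditional expectation $M\to P$ and $P$ is AFD, contradicting the hypothesis.

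\emph{The properly infinite case.} If $B(H)^{\phi}$ is semifinite and properly infinite (type ${\rm I}_\infty$ or ${\rm II}_\infty$), I would observe that the computation in the proof of Theorem~3.8 uses only a faithful normal semifinite tracial weight $Tr$ together with an increasing net of finite projections with supremum $1$ — both available in any semifinite factor — and concludes that $e_P$ is a finite projection in $B(H)^{\phi}$. Then $e_PB(H)^{\phi}e_P$ is a finite AFD factor containing $e_P\langle M,e_P\rangle e_P=Pe_P$. Since $e_P\in P'$ and $\xi_0\in L^2(P)$, the map $p\mapsto pe_P$ is a normal $*$-isomorphism of $P$ onto $Pe_P$ (it is multiplicative because $e_P$ commutes with $P$ and $e_P^2=e_P$, and injective because $pe_P=0$ forces $p\xi_0=e_P p\xi_0=0$). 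As in the finite case, $Pe_P$, being a von Neumann subalgebra of a finite AFD factor, is AFD, hence $P$ is AFD — a contradiction. Therefore $B(H)^{\phi}$ is not semifinite, so it is an AFD type ${\rm III}$-factor.

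\emph{Main obstacle.} The delicate points are (i) justifying that AFD-ness descends from a finite AFD algebra to an arbitrary von Neumann subalgebra (via trace-preserving conditional expectations and stability of injectivity under expectations), and (ii) checking that the finiteness of $e_P$ obtained in Theorem~3.8 is genuinely available under the mere assumption of semifiniteness, so that the type ${\rm I}_\infty$ possibility is also excluded; everything else is bookkeeping with the inclusions $P\subseteq M\subseteq\langle M,e_P\rangle\subseteq B(H)^{\phi}$.
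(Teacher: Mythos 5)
Your proof is correct and follows essentially the same route as the paper: both arguments reduce, via Theorem~3.8, to a clash between the AFD-ness of $B(H)^{\phi}$ and the non-AFD-ness of $P$ in the finite and properly infinite cases. The only differences are cosmetic — you transfer injectivity \emph{down} to $P$ through trace-preserving conditional expectations, whereas the paper transfers non-injectivity \emph{up} using $\langle M,e_P\rangle=(JPJ)'$ and the fact that corners of AFD algebras are AFD — together with your explicit handling of the type ${\rm I}_\infty$ possibility, a case the paper's wording passes over but which is excluded by the same computation from Theorem~3.8.
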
 

\begin{proof}
Since $P$ is non-AFD, $\langle 
M,e_P
\rangle$ is also non-AFD. Then since $B(H)^{\phi}$ is 
AFD, we have $B(H)^{\phi}\not=\langle 
M,e_P
\rangle$. Then by the previous theorem,  $B(H)^{\phi}$ is a 
properly infinite factor. If  $B(H)^{\phi}$ is a ${\rm II}_\infty$-factor, 
then $e_PB(H)^{\phi}e_P=Pe_P$.  However since $Pe_P$ is non-AFD 
and $e_PB(H)^{\phi}e_P$ is AFD, 
this cannot occur and hence  $B(H)^{\phi}$ is a type 
${\rm III}$-factor.

\end{proof}

\section{Some applications to subfactors}
Let 
$\phi=\sum_{n=1}^\infty{\rm Ad}Jz_nJ\in G$ $(z_n\in M)$ such that  
$\sum_{n=1}^\infty z_nz_n^*=\sum_{n=1}^\infty z_n^*z_n=1$ 
and for each $n$, there exists $k$ such that $z_n^*=z_k$. 
Let $P_k$ be a von Neumann algebra generated by $\{z_{n_1}z_{n_2}\cdots z_{n_k}\}_{n_1,n_2,\cdots,n_k=1}^\infty$. 
Let $e_k$ be the Jones projection onto $L^2(P_k,\tau)$. 
We remark that 
$
\phi^k=\sum_{n_1,n_2,\cdots,n_k=1}^\infty{\rm Ad}Jz_{n_1}z_{n_2}\cdots z_{n_k}J\in G 
$. Then we have the following inclusions. 
$$
\begin{matrix}
P_{mk}&\subseteq& P_k&\subseteq &M&
\subseteq
&\langle 
M,e_k
\rangle&
\subseteq
&\langle 
M,e_{mk}
\rangle\\
 & & & & & &\cap& &\cap\\
 & & & & & & 
B(H)^{\phi^k}&\subseteq &B(H)^{\phi^{mk}}
\end{matrix}
$$ 
We further assume that ${P_2}'\cap P_1={\Bbb C}$ and 
$P_2\not=P_1$.

\begin{lem}
The inclusion $B(H)^{\phi^k}\subseteq B(H)^{\phi^{mk}}$ is isomorphic to 
an inclusion of von Neumann algebras as operator systems. That is, both $B(H)^{\phi^k}$ and $B(H)^{\phi^{mk}}$ 
have the same product as von Neumann algebras.
\end{lem}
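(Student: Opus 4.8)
The plan is to realize $B(H)^{\phi^k}$ as an honest von Neumann subalgebra of the Choi--Effros von Neumann algebra $B(H)^{\phi^{mk}}$, and then to check that the product it inherits from $B(H)^{\phi^{mk}}$ coincides with its own Choi--Effros product. The guiding observation is that, once we pass to the larger algebra $B(H)^{\phi^{mk}}$, the residual symmetry $\phi^k$ has become a map of order dividing $m$, hence an automorphism.

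\emph{Step 1: $\phi^k$ acts as a finite-order $*$-automorphism of $B(H)^{\phi^{mk}}$.} Write $N:=B(H)^{\phi^{mk}}$ with its Choi--Effros product $S\circ_{mk}T=E_{mk}(ST)$, where $E_{mk}$ is the ergodic-average conditional expectation of Section 3; recall that the order, norm, involution, unit $1$ and $\sigma$-weak topology of $N$ as a von Neumann algebra are exactly those inherited from $B(H)$. Since $\phi^{mk}\circ\phi^k=\phi^k\circ\phi^{mk}$, the map $\phi^k$ leaves $N$ invariant; let $\beta$ be its restriction. Then $\beta$ is a unital normal completely positive map on the operator system $N$, and $\beta^m=\phi^{mk}|_N={\rm id}_N$ because $N$ is precisely the fixed point space of $\phi^{mk}$. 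Hence $\beta$ is invertible with $\beta^{-1}=\beta^{m-1}$ again unital completely positive, so $\beta$ is a unital complete order isomorphism of $N$; by the classical fact that such a map between von Neumann algebras is a $*$-isomorphism, $\beta$ is a $*$-automorphism of $N$ of order dividing $m$. Its fixed point set $N^\beta$ is therefore a von Neumann subalgebra of $N$. But $N^\beta=\{T\in B(H)^{\phi^{mk}}:\phi^k(T)=T\}=B(H)^{\phi^{mk}}\cap B(H)^{\phi^k}=B(H)^{\phi^k}$, using the inclusion $B(H)^{\phi^k}\subseteq B(H)^{\phi^{mk}}$. So $B(H)^{\phi^k}$ is $\circ_{mk}$-closed and is a von Neumann subalgebra of $(B(H)^{\phi^{mk}},\circ_{mk})$; this already shows the operator system inclusion $B(H)^{\phi^k}\subseteq B(H)^{\phi^{mk}}$ is isomorphic to an inclusion of von Neumann algebras.

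\emph{Step 2: the inherited product is the Choi--Effros product of $\phi^k$.} Let $E_k$ and $E_{mk}$ be the ergodic-average conditional expectations of Section 3. Since powers of $\phi$ commute, a routine computation using normality shows that $E_k$ and $E_{mk}$ commute; and since the range of $E_k$ is $B(H)^{\phi^k}\subseteq B(H)^{\phi^{mk}}$, on which $E_{mk}$ is the identity, we get $E_{mk}\circ E_k=E_k$ and hence $E_k\circ E_{mk}=E_k$. Now take $x,y\in B(H)^{\phi^k}$. By Step 1, $x\circ_{mk}y=E_{mk}(xy)$ again lies in $B(H)^{\phi^k}$, so it is fixed by $E_k$; on the other hand $E_k(E_{mk}(xy))=E_k(xy)=x\circ_k y$. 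Therefore $x\circ_{mk}y=x\circ_k y$. As the involutions and units of the two algebras also agree (both are inherited from $B(H)$), $(B(H)^{\phi^k},\circ_k)$ is exactly the von Neumann subalgebra $N^\beta$ of $(B(H)^{\phi^{mk}},\circ_{mk})$, which is the assertion.

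The main obstacle is Step 1: a priori the two Choi--Effros products are defined by completely unrelated ergodic averages, and there is no reason for the set inclusion to respect multiplication — a unital completely positive idempotent in general only makes its range into an operator system carrying a \emph{twisted} product. What rescues us is that $\phi^k$, viewed on $B(H)^{\phi^{mk}}$, is not merely idempotent-like but of finite order, so it is an automorphism and its fixed points form an ordinary subalgebra. One should also record carefully that the Choi--Effros von Neumann algebra structure on $B(H)^{\phi^{mk}}$ carries the operator system structure inherited from $B(H)$, so that $\beta$ is genuinely normal and $N^\beta$ is $\sigma$-weakly closed; this is built into the construction recalled at the beginning of Section 3.
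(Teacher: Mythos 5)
Your route is genuinely different from the paper's. The paper invokes the Bhat dilation of $\phi$ to a normal $*$-endomorphism $\alpha$ of some $B(K)\supseteq B(H)$ together with Prunaru's theorem, which identifies $B(H)^{\phi^k}=e_HB(K)^{\alpha^k}e_H$ with the honest fixed-point von Neumann algebra $B(K)^{\alpha^k}$, compatibly with the inclusions. You instead work intrinsically: on $N=B(H)^{\phi^{mk}}$ the map $\phi^k$ satisfies $(\phi^k)^m=\mathrm{id}$, hence is a unital complete order automorphism, hence a $*$-automorphism of the Choi--Effros algebra $(N,\circ_{mk})$, whose fixed-point set is exactly $B(H)^{\phi^k}$ and is therefore a von Neumann subalgebra. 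Your Step 1 is correct and gives an attractive dilation-free argument; its price is that it uses the periodicity of $\phi^k$ on $N$ in an essential way, so, unlike the dilation argument, it would not transfer to situations such as Lemma 4.8, where the smaller algebra is the fixed-point space of a map that is not of finite order on the larger one.

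Step 2 as written has a gap. The expectations $E_k$ and $E_{mk}$ are pointwise weak limits of Ces\`aro averages along a free ultrafilter; they are not normal in general, so ``normality'' cannot be invoked, and the asserted commutation is not a routine computation: regrouping $\frac{1}{n}\sum_{j\le n}\phi^{jk}$ into blocks of length $m$ compares the limit along $\omega$ of the averages over $\{1,\dots,n\}$ with the limit along $\omega$ of the averages over $\{1,\dots,mn\}$, and these two ultrafilter limits need not agree. (Also, $E_k\circ E_{mk}=E_k$ does not follow ``hence'' from $E_{mk}\circ E_k=E_k$.) Fortunately the conclusion of Step 2 is true for a softer reason that your Step 1 already delivers: by Step 1 the set $B(H)^{\phi^k}$ is closed under $\circ_{mk}$, and $(B(H)^{\phi^k},\circ_{mk})$ is a C*-algebra whose matrix order unit structure is the one inherited from $B(H)$, i.e.\ the same as that of $(B(H)^{\phi^k},\circ_k)$. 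The identity map between these two C*-algebras is then a unital complete order isomorphism, hence a $*$-isomorphism by the same Schwarz-inequality argument you used for $\beta$; that is, $\circ_{mk}$ and $\circ_k$ coincide on $B(H)^{\phi^k}$. With Step 2 replaced by this uniqueness-of-the-Choi--Effros-product argument, your proof is complete.
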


\begin{proof}
This is a consequence of the Bhat dilation \cite{B96,B99} and the Prunaru theorem \cite{Pru12}. 
By the Bhat dilation, we have a Hilbert space $K\supset H$ and a unital 
normal $*$-endmorphism $\alpha$ on $B(K)$ such that 
$e_H\alpha^n(T)e_H=\phi^n(T)$ for any $T\in B(H)$, where 
$e_H$ is a projection onto $H$. Then by the Prunaru theorem, 
$e_H B(K)^{\alpha^k}e_H=B(H)^{\phi^k}$ and 
$e_H B(K)^{\alpha^k}e_H\subseteq e_H B(K)^{\alpha^{mk}}e_H
\simeq B(K)^{\alpha^k}\subseteq B(K)^{\alpha^{mk}}
$ as operator systems. 
 
\end{proof}

\begin{lem}
The inclusion $B(H)^{\phi}\subseteq B(H)^{\phi^2}$ is irreducible.
\end{lem}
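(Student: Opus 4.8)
The plan is to show that $B(H)^{\phi}{}'\cap B(H)^{\phi^2}=\mathbb{C}$, where the relative commutant is taken inside $B(H)^{\phi^2}$ with respect to the Choi--Effros product. First I would invoke the hypotheses in force in Section~4: $P_2'\cap P_1=\mathbb{C}$ and $P_2\neq P_1$. Since $P_1$ is the von Neumann algebra generated by $\{z_n\}$ and $P_2'\cap P_1=\mathbb{C}$ forces in particular that $P_1$ is a factor (as $Z(P_1)\subseteq P_2'\cap P_1$), Proposition~3.1 applies with $\phi$: $B(H)^{\phi}$ is a factor and the inclusion $M\subseteq B(H)^{\phi}$ is irreducible; similarly $B(H)^{\phi^2}$ is a factor, using that $\phi^2\in G$ with associated algebra $P_2$, again a factor under the standing hypothesis.

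Next I would take any $T$ in the relative commutant $B(H)^{\phi}{}'\cap B(H)^{\phi^2}$ and aim to place it inside a tractable subalgebra. The natural move is to apply the conditional expectation machinery: $T$ commutes (in the Choi--Effros sense) with all of $B(H)^{\phi}$, hence in particular with $\langle M,e_{P_1}\rangle\subseteq B(H)^{\phi}$, so $T\in \langle M,e_{P_1}\rangle'\cap B(H)^{\phi^2}$. Now $\langle M,e_{P_1}\rangle=(JP_1J)'$, so $\langle M,e_{P_1}\rangle'=JP_1J$, and $T\in JP_1J\cap B(H)^{\phi^2}$. Since $JP_1J\subseteq\langle M,e_{P_1}\rangle$, the element $T$ actually lies in the multiplicative domain of $\phi^2$; combined with $T\in B(H)^{\phi^2}$ and $T^*T\in B(H)^{\phi^2}$ (both being products within $\langle M,e_{P_1}\rangle\subseteq\langle M,e_{P_2}\rangle$), Lemma~2.2 for $\phi^2$ gives $T\in\langle M,e_{P_2}\rangle=(JP_2J)'$. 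Therefore $T\in JP_1J\cap (JP_2J)'=J(P_1\cap P_2')J=J(P_2'\cap P_1)J=\mathbb{C}$ by hypothesis.

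I expect the main obstacle to be the bookkeeping around the two products: one must be careful that ``$T$ commutes with $B(H)^{\phi}$'' is being used in the Choi--Effros sense, and that on the overlap $\langle M,e_{P_1}\rangle$ this Choi--Effros product coincides with the ordinary operator product (which is exactly the remark after the definition of $E$: $\langle M,e_{P_1}\rangle$ is in the multiplicative domain and $\phi=\mathrm{id}$ there). Once that identification is in place, the argument reduces to Lemma~2.2 applied to $\phi^2$ together with the irreducibility hypothesis $P_2'\cap P_1=\mathbb{C}$, and the conclusion follows. An alternative, slightly cleaner route is to work in the Bhat dilation picture of Lemma~4.1, where $B(H)^{\phi}\subseteq B(H)^{\phi^2}$ is realized as $B(K)^{\alpha}\subseteq B(K)^{\alpha^2}$ cut by $e_H$, reducing irreducibility to the endomorphism setting; but the direct argument above via Lemma~2.2 and Theorem~2.3 seems most economical and I would present that.
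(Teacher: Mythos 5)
Your overall strategy is the right one and is essentially the paper's: reduce the relative commutant ${B(H)^{\phi}}'\cap B(H)^{\phi^2}$ to $JP_1J\cap(JP_2J)'=J(P_2'\cap P_1)J={\Bbb C}$, using that $\langle M,e_1\rangle\subseteq B(H)^{\phi}$ lies in the multiplicative domain, so that Choi--Effros commutation with it is ordinary operator commutation. But there is a genuine error in your middle step. You assert that $JP_1J\subseteq\langle M,e_{P_1}\rangle$. Since $\langle M,e_{P_1}\rangle=(JP_1J)'$, this containment says that $JP_1J$ commutes with itself, i.e.\ that $P_1$ is abelian --- which it is not: under the standing hypotheses $P_2'\cap P_1={\Bbb C}$ and $P_2\neq P_1$, $P_1$ is a nonabelian factor. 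The same problem affects your claim that $T$ lies in the multiplicative domain of $\phi^2$: that domain is $(JP_2J)'$, and $JP_1J\subseteq(JP_2J)'$ would force $P_2\subseteq Z(P_1)={\Bbb C}$. As written, then, your justification that $T^*T\in B(H)^{\phi^2}$ --- the hypothesis you need in order to invoke Lemma~2.2 for $\phi^2$ --- does not stand.

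The gap is repairable, and the repair is exactly the paper's route. From $T\in\langle M,e_1\rangle'=JP_1J$ you get $T\in M'$, since $JP_1J\subseteq JMJ=M'$. Now apply Theorem~2.3 to $\phi^2$: $M'\cap B(H)^{\phi^2}=M'\cap\langle M,e_2\rangle$. Hence $T\in\langle M,e_2\rangle=(JP_2J)'$ with no appeal to Lemma~2.2 at all, and
\[
T\in JP_1J\cap(JP_2J)'=J(P_1\cap P_2')J=J(P_2'\cap P_1)J={\Bbb C}.
\]
(Equivalently: Theorem~2.3 already tells you that $M'\cap B(H)^{\phi^2}$ is a $*$-algebra, which is what you would need in order to legitimize $T^*T\in B(H)^{\phi^2}$ before using Lemma~2.2 --- but at that point Lemma~2.2 is redundant.) With this substitution your argument becomes the proof given in the paper.
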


\begin{proof}
By theorem 2.3, we have $M'\cap B(H)^{\phi}=M'\cap \langle 
M,e_1
\rangle$ and $M'\cap B(H)^{\phi^2}=M'\cap \langle 
M,e_2
\rangle$. Then we have 
$
{B(H)^{\phi}}'\cap B(H)^{\phi^2}={B(H)^{\phi}}'\cap M'\cap B(H)^{\phi^2}
={B(H)^{\phi}}'\cap M'\cap \langle M,e_2\rangle\subseteq \langle 
M,e_1\rangle'\cap \langle M,e_2\rangle=JP_2'\cap P_1J={\Bbb C}
$. 
\end{proof}

\begin{lem}
We set $F(T)=\frac{1}{2}(T+\phi(T))$. Then $F$ is a normal conditional 
expectation from $B(H)^{\phi^2}$ onto $B(H)^{\phi}$. Moreover 
$F(\langle 
M,e_2
\rangle)\subseteq\langle 
M,e_1
\rangle$.  
\end{lem}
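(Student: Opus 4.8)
The plan is to verify the three assertions in turn, all of which are essentially computations once one uses the structural facts already established. First I would show $F$ maps $B(H)^{\phi^2}$ into $B(H)^\phi$. For $T\in B(H)^{\phi^2}$ we have $\phi^2(T)=T$, so $\phi(F(T))=\tfrac12(\phi(T)+\phi^2(T))=\tfrac12(\phi(T)+T)=F(T)$; hence $F(T)\in B(H)^\phi$. Normality and unitality of $F$ are immediate since $\phi$ is normal and unital. Next, $F$ is idempotent on its range: if $T\in B(H)^\phi$ then $\phi(T)=T$, so $F(T)=\tfrac12(T+T)=T$. Thus $F$ is a normal idempotent norm-one projection from $B(H)^{\phi^2}$ onto $B(H)^\phi$, which makes it a conditional expectation (positivity follows since $\phi$ is positive, and the Tomiyama-type bimodule property over $B(H)^\phi$ can be checked directly, or one simply invokes that a norm-one projection onto a subalgebra is a conditional expectation --- here one should be slightly careful, since the relevant products are the Choi--Effros products, so I would instead verify the bimodule property $F(S\circ T\circ S')=S\circ F(T)\circ S'$ for $S,S'\in B(H)^\phi$ directly, using that $\langle M,e_P\rangle$-elements lie in the multiplicative domain and, more generally, that the Choi--Effros product on $B(H)^{\phi^2}$ restricts compatibly).

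The main obstacle, as I see it, is precisely making sure that $F$ respects the two different Choi--Effros products (the one defining $B(H)^{\phi^2}$ as a von Neumann algebra, and the one on $B(H)^\phi$) so that "conditional expectation" is meant in the von Neumann algebra sense and not merely as a completely positive projection. I expect this to follow from Lemma 4.2: since $B(H)^{\phi^2}\subseteq B(H)^\phi$ as operator systems is isomorphic to an honest inclusion of von Neumann algebras $B(K)^{\alpha^2}\supseteq B(K)^{\alpha}$ (wait --- the inclusion goes $B(H)^{\phi}\subseteq B(H)^{\phi^2}$), the Choi--Effros product on $B(H)^\phi$ is the restriction of that on $B(H)^{\phi^2}$, so a completely positive unital idempotent onto a von Neumann subalgebra is automatically a conditional expectation in the required sense. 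Concretely I would phrase it: $F$ is completely positive (a convex combination of completely positive maps), unital, normal, idempotent with range $B(H)^\phi$, and $B(H)^\phi$ is a von Neumann subalgebra of $B(H)^{\phi^2}$, hence $F$ is a normal conditional expectation.

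Finally, for the statement $F(\langle M,e_2\rangle)\subseteq\langle M,e_1\rangle$: since $\phi=\sum_n \mathrm{Ad}\,Jz_nJ$ and $\langle M,e_2\rangle=(JP_2J)'$ while $\langle M,e_1\rangle=(JP_1J)'$, and $P_2\subseteq P_1$ gives $JP_1J\subseteq JP_2J$, hence $\langle M,e_1\rangle\subseteq\langle M,e_2\rangle$. For $x\in\langle M,e_2\rangle$ we compute $F(x)=\tfrac12\bigl(x+\sum_n Jz_nJ\,x\,Jz_n^*J\bigr)$. One checks that this lies in $(JP_1J)'$: for $y\in P_1$, since $\{z_n\}$ generates $P_1$ as a von Neumann algebra, it suffices to commute $F(x)$ with each $Jz_mJ$; using $x\in(JP_2J)'$, so $x$ commutes with every $J z_{n}z_{m} J$, a direct manipulation with $\sum_n z_nz_n^*=1$ shows $[F(x),Jz_mJ]=0$, whence $F(x)\in(JP_1J)'=\langle M,e_1\rangle$. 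I would write this commutator computation out carefully, as it is the one genuinely new calculation here, but it is short and parallels the argument of Lemma 2.2.
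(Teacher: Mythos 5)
Your proposal is correct and follows essentially the same route as the paper: the paper dismisses the first assertion as obvious (your extra care about the two Choi--Effros products, via Lemma 4.1, is a reasonable elaboration of why "conditional expectation" makes sense), and the commutator computation you sketch is exactly the paper's argument. For the record, the "direct manipulation" you defer is: $Jz_kJ\phi(T)=\sum_n Jz_kz_nJ\,T\,Jz_n^*J=T\,Jz_kJ\sum_n Jz_nz_n^*J=TJz_kJ$ using $T\in(JP_2J)'$, and then the companion identity $Jz_kJ\,T=\phi(T)Jz_kJ$ obtained by applying this to $T^*$ and taking adjoints (here the $*$-closedness of both $\{z_n\}_n$ and $\langle M,e_2\rangle$ is what is actually used); summing the two identities gives $[F(T),Jz_kJ]=0$, so $F(T)\in(\{Jz_nJ\}_n)'=\langle M,e_1\rangle$.
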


\begin{proof}
The first statement is obvious. We will show that $F(\langle 
M,e_2
\rangle)\subseteq\langle 
M,e_1
\rangle$. For any $T\in \langle 
M,e_2
\rangle=(\{Jz_{n_1}z_{n_2}J\}_{n_1,n_2=1}^\infty)'$, we see that 
$$
Jz_kJ\phi(T)=\sum_{n=1}^\infty Jz_kz_nJTJz_n^*J=
\sum_{n=1}^\infty T (Jz_kz_nJ)Jz_n^*J=TJz_kJ\sum_{n=1}^\infty Jz_nz_n^*J=TJz_kJ.
$$
Since both  $\langle 
M,e_2
\rangle$ and $\{z_n\}_{n=1}^\infty$ are closed under the $*$-operation, 
we also have 
$
Jz_kJT=\phi(T)Jz_kJ
$. Then we have $Jz_kJF(T)=F(T)Jz_kJ$ and hence $F(T)\in (\{Jz_{n}J\}_{n=1}^\infty)'
=\langle 
M,e_1
\rangle$
\end{proof}

Recall that an irreducible subfactor has a unique normal conditional expectation if it exists. 
Since the inclusions $\langle 
M,e_1
\rangle
\subseteq
\langle 
M,e_{2}
\rangle$ and $B(H)^{\phi}\subseteq B(H)^{\phi^2}$ are irreducible, 
they have the unique conditional expectation $F$.

\begin{prop}
$[P_2:P_1]=[B(H)^{\phi^2}:B(H)^{\phi}]=2$.
\end{prop}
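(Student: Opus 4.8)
The plan is to identify the map $F(T)=\frac{1}{2}(T+\phi(T))$ of Lemma 4.3 as the unique conditional expectation of the irreducible inclusion $B(H)^{\phi}\subseteq B(H)^{\phi^2}$, to bound its index above by $2$, and then to pin the index down by squeezing it against the Jones index $[P_2:P_1]$ of $P_2\subseteq P_1$ and invoking Jones' quantization theorem \cite{J}.

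First I would record the factoriality and irreducibility that make index theory applicable. From the standing hypothesis $P_2'\cap P_1={\Bbb C}$ one gets $Z(P_2)\subseteq P_2'\cap P_1={\Bbb C}$ and $Z(P_1)=P_1'\cap P_1\subseteq P_2'\cap P_1={\Bbb C}$, so $P_1$ and $P_2$ are factors; hence $\langle M,e_1\rangle=(JP_1J)'$ and $\langle M,e_2\rangle=(JP_2J)'$ are factors, and by Proposition 3.1 so are $B(H)^{\phi}$ and $B(H)^{\phi^2}$ for the Choi--Effros products, which by Lemma 4.1 are honest von Neumann algebra products. The inclusion $B(H)^{\phi}\subseteq B(H)^{\phi^2}$ is irreducible by Lemma 4.2, and $\langle M,e_1\rangle\subseteq\langle M,e_2\rangle$ is irreducible because $\langle M,e_1\rangle'\cap\langle M,e_2\rangle=JP_1J\cap(JP_2J)'=J(P_2'\cap P_1)J={\Bbb C}$. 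By Lemma 4.3, $F$ is a normal conditional expectation of $B(H)^{\phi^2}$ onto $B(H)^{\phi}$, so by irreducibility it is \emph{the} conditional expectation and $[B(H)^{\phi^2}:B(H)^{\phi}]={\rm Ind}(F)$.

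Now the numerical part. For $x\ge 0$ in $B(H)^{\phi^2}$ one has $F(x)=\frac{1}{2}(x+\phi(x))\ge\frac{1}{2}x$ since $\phi$ is positive, so by the Pimsner--Popa estimate ${\rm Ind}(F)\le 2$. I then restrict $F$ to $\langle M,e_2\rangle$: by Lemma 4.3, $F(\langle M,e_2\rangle)\subseteq\langle M,e_1\rangle$, while $F$ is the identity on $\langle M,e_1\rangle\subseteq B(H)^{\phi}$, so in fact $F(\langle M,e_2\rangle)=\langle M,e_1\rangle$ and $F|_{\langle M,e_2\rangle}$ is a normal conditional expectation of $\langle M,e_2\rangle$ onto $\langle M,e_1\rangle$; by irreducibility it is the unique one, hence ${\rm Ind}(F|_{\langle M,e_2\rangle})=[\langle M,e_2\rangle:\langle M,e_1\rangle]$. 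Since the estimate $F(x)\ge{\rm Ind}(F)^{-1}x$ holds for \emph{all} positive $x$, in particular for positive $x\in\langle M,e_2\rangle$, restricting $F$ cannot decrease the Pimsner--Popa constant, so ${\rm Ind}(F|_{\langle M,e_2\rangle})\le{\rm Ind}(F)\le 2$. Finally, $[\langle M,e_2\rangle:\langle M,e_1\rangle]$ equals the Jones index $[P_2:P_1]$ of $P_2\subseteq P_1$ by the standard behaviour of the index under the downward basic construction (equivalently, by taking commutants in $JP_2J\subseteq JP_1J$).

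Putting these together, all the algebras in sight are factors and $1<[P_2:P_1]=[\langle M,e_2\rangle:\langle M,e_1\rangle]\le[B(H)^{\phi^2}:B(H)^{\phi}]\le 2$, the strict inequality on the left being because $P_2\neq P_1$. By Jones' theorem \cite{J} the index of a subfactor lies in $\{4\cos^2(\pi/n):n\ge 3\}\cup[4,\infty]$, whose only value in $(1,2]$ is $2$ (this also excludes the finite dimensional case); hence $[P_2:P_1]=2$, and the chain of inequalities collapses to $[P_2:P_1]=[B(H)^{\phi^2}:B(H)^{\phi}]=2$. The step I expect to be the main obstacle is the middle one: verifying that $F$ restricted to the $\langle M,e_1\rangle$-$\langle M,e_1\rangle$ bimodule $\langle M,e_2\rangle$ really is the unique conditional expectation onto $\langle M,e_1\rangle$ and that this restriction cannot raise the index, so that ${\rm Ind}(F)$ genuinely controls $[P_2:P_1]$; the remainder is bookkeeping of factoriality together with the appeal to Jones' quantization.
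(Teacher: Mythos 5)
Your argument is correct, and its skeleton is the paper's: both proofs obtain the upper bound $2$ from the Pimsner--Popa estimate $F(T)=\frac{1}{2}(T+\phi(T))\ge\frac{1}{2}T$ together with Jones' quantization, and both get $[P_2:P_1]=2$ from $P_1\ne P_2$. The genuine divergence is in the lower bound for $[B(H)^{\phi^2}:B(H)^{\phi}]$. The paper rules out index $1$ softly: if $B(H)^{\phi^2}=B(H)^{\phi}$, then by Lemma 2.2 their maximum $*$-subalgebras $\langle M,e_2\rangle$ and $\langle M,e_1\rangle$ coincide, hence $P_1=P_2$ by taking commutants, a contradiction; combined with quantization this forces the index to be $2$. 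You instead prove the quantitative inequality $[\langle M,e_2\rangle:\langle M,e_1\rangle]\le[B(H)^{\phi^2}:B(H)^{\phi}]$ by comparing the optimal Pimsner--Popa constants of $F$ and of its restriction $F|_{\langle M,e_2\rangle}$, which by Lemma 4.3 and irreducibility is the unique normal expectation onto $\langle M,e_1\rangle$. Both routes are sound. Yours buys something: quantization (and the equality of the probabilistic and Kosaki indices) is invoked only for the semifinite inclusion $\langle M,e_1\rangle\subseteq\langle M,e_2\rangle$, equivalently for the honest ${\rm II}_1$ subfactor $P_2\subseteq P_1$, the index of the possibly type ${\rm III}$ inclusion then being squeezed between $2$ and $2$; the paper must apply these facts to $B(H)^{\phi}\subseteq B(H)^{\phi^2}$ itself. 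The price is that your key step leans on the optimality of the Pimsner--Popa constant for the irreducible sub-inclusion (this is what converts monotonicity of constants into monotonicity of indices), whereas the paper's Lemma 2.2 argument uses only material already established earlier. If you keep your version, state explicitly that the equality $\lambda(F|_{\langle M,e_2\rangle})^{-1}=[\langle M,e_2\rangle:\langle M,e_1\rangle]$ is where the irreducibility $\langle M,e_1\rangle'\cap\langle M,e_2\rangle=J(P_2'\cap P_1)J={\Bbb C}$ enters.
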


\begin{proof}
For any positive element $T\in B(H)^{\phi^2}$, we have 
$
F(T)=\frac{1}{2}(T+\phi(T))\geq \frac{1}{2}T
$. Therefore 
by the Pimsner-Popa theorem \cite{PP,Po}, 
both $[P_2:P_1]$ and $[B(H)^{\phi^2}:B(H)^{\phi}]$ are equal to 
either $1$ or $2$. Since $P_1\not=P_2$, we have $[P_2:P_1]=2$. 
We assume that $B(H)^{\phi^2}=B(H)^{\phi}$. By lemma 2.2, $\langle M,e_1\rangle$ is a 
maximum $*$-subalgebra of $B(H)^{\phi}$ and $\langle M,e_2\rangle$ is a 
maximum $*$-subalgebra of $B(H)^{\phi^2}$. Therefore we get 
$\langle M,e_1\rangle=\langle M,e_2\rangle$ and hence $P_1=P_2$. 
Since $P_1\not=P_2$, we conclude that  $B(H)^{\phi^2}\not=B(H)^{\phi}$ and 
$[B(H)^{\phi^2}:B(H)^{\phi}]=2$.
\end{proof}

\begin{prop}
Let $\{m_i\}_{i=1}^n\subset \langle 
M,e_2
\rangle$ be a Pimsner-Popa basis of the inclusion 
$\langle 
M,e_1
\rangle\subset \langle 
M,e_2
\rangle$. That is, for any $x\in \langle 
M,e_2
\rangle$, we have 
$
x=\sum_{i=1}^n m_iF(m_i^*x).
$ 
Then for any $T\in B(H)^{\phi^2}$, we also have 
$
T=\sum_{i=1}^n m_iF(m_i^*T).
$ 
\end{prop}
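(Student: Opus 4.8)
The plan is to prove the equivalent assertion that the normal linear map $\Phi\colon B(H)^{\phi^2}\to B(H)^{\phi^2}$, $\Phi(T)=\sum_{i=1}^n m_iF(m_i^*T)$, is the identity. First, $\Phi$ is a well-defined normal map: $\langle M,e_2\rangle\subseteq B(H)^{\phi^2}$ lies in the multiplicative domain of $\phi^2$ and $F$ carries $B(H)^{\phi^2}$ into $B(H)^{\phi}\subseteq B(H)^{\phi^2}$, so $m_i^*T$ and then $m_iF(m_i^*T)$ are again in $B(H)^{\phi^2}$, while $F$ is normal by Lemma 4.3. Second, $\Phi$ restricts to the identity on $\langle M,e_2\rangle$: for $x\in\langle M,e_2\rangle$ one has $m_i^*x\in\langle M,e_2\rangle$ and $F(m_i^*x)\in\langle M,e_1\rangle$ by Lemma 4.3, so $\Phi(x)=\sum_i m_iF(m_i^*x)=x$ is precisely the defining property of the Pimsner-Popa basis. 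In what follows only two instances of that property are used: $\sum_i m_iF(m_i^*)=1$ (the case $x=1$) and $\sum_i m_iF(m_i^*w)=w$ (the case $x=w$), for a unitary $w\in\langle M,e_2\rangle$ constructed next.

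The crucial step is to describe the second leg of the inclusion $B(H)^{\phi}\subseteq B(H)^{\phi^2}$ through the small inclusion $\langle M,e_1\rangle\subseteq\langle M,e_2\rangle$. Since $P_2'\cap P_1={\Bbb C}$, both $P_1$ and $P_2$ are factors (their centers lie in $P_2'\cap P_1={\Bbb C}$), hence so are $\langle M,e_1\rangle$ and $\langle M,e_2\rangle$; the inclusion $\langle M,e_1\rangle\subseteq\langle M,e_2\rangle$ is proper because $P_1\neq P_2$, and irreducible because $\langle M,e_1\rangle'\cap\langle M,e_2\rangle=JP_1J\cap(JP_2J)'=J(P_2'\cap P_1)J={\Bbb C}$; moreover by Lemma 4.3 the restriction of $F=\frac12(\mathrm{id}+\phi)$ is a normal conditional expectation onto $\langle M,e_1\rangle$ with $F(x)\geq\frac12x$ for $x\geq0$, so this inclusion has index exactly $2$ (as in Proposition 4.4). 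By the structure of index-$2$ inclusions (Goldman's theorem) there is a unitary $w\in\langle M,e_2\rangle$ with $\langle M,e_2\rangle=\langle M,e_1\rangle+\langle M,e_1\rangle w$ and $F(w)=0$; then $\phi(w)=2F(w)-w=-w$. Plugging $T=w$ into the identity $Jz_nJ\,\phi(T)=T\,Jz_nJ$, valid for $T\in\langle M,e_2\rangle$ (this is the computation inside the proof of Lemma 4.3), gives $Jz_nJ\,w=-w\,Jz_nJ$ for all $n$, and hence also $w\,Jz_n^*J=-Jz_n^*J\,w$ (using that $\{z_n\}_n$ is $*$-closed). Substituting these anticommutations into $\phi=\sum_n\mathrm{Ad}\,Jz_nJ$ yields: (a) $\phi(wxw^*)=w\phi(x)w^*$ for every $x\in B(H)$, whence $wB(H)^{\phi}w^*=B(H)^{\phi}$, so $w$ normalizes $B(H)^{\phi}$; and (b) $\phi(c'w)=-c'w$ for $c'\in B(H)^{\phi}$ and $\phi(Sw^*)=-\phi(S)w^*$ for $S\in B(H)^{\phi^2}$, so that the kernel of $F$ on $B(H)^{\phi^2}$, which is $\{S\in B(H)^{\phi^2}\colon\phi(S)=-S\}$, equals $B(H)^{\phi}w$. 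Therefore $B(H)^{\phi^2}=B(H)^{\phi}+B(H)^{\phi}w$, via $T=F(T)+(T-F(T))$ with $F(T)\in B(H)^{\phi}$ and $T-F(T)\in B(H)^{\phi}w$.

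Granting this, the proposition follows by a short computation. For $T\in B(H)^{\phi^2}$ write $T=c+c'w$ with $c=F(T)\in B(H)^{\phi}$ and $c'\in B(H)^{\phi}$. Since $c$ belongs to the range $B(H)^{\phi}$ of $F$ while $F(m_i^*)\in\langle M,e_1\rangle$ lies in the multiplicative domain of $\phi$ (so the Choi-Effros and ordinary products agree there), $F(m_i^*c)=F(m_i^*)c$, and hence $\Phi(c)=\bigl(\sum_i m_iF(m_i^*)\bigr)c=c$. Next put $c''=w^*c'w\in B(H)^{\phi}$ (using that $w$ normalizes $B(H)^{\phi}$); then $c'w=wc''$, and since $m_i^*w\in\langle M,e_2\rangle$, $F(m_i^*w)\in\langle M,e_1\rangle$ and $c''\in B(H)^{\phi}$, we get $F(m_i^*c'w)=F(m_i^*wc'')=F(m_i^*w)c''$, so $\Phi(c'w)=\bigl(\sum_i m_iF(m_i^*w)\bigr)c''=wc''=c'w$. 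Adding, $\Phi(T)=c+c'w=T$, which is the desired formula $T=\sum_i m_iF(m_i^*T)$.

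The one substantial point is the identity $B(H)^{\phi^2}=B(H)^{\phi}+B(H)^{\phi}w$ in the middle paragraph: one must see that the order-two generator $w$ of the small inclusion $\langle M,e_1\rangle\subseteq\langle M,e_2\rangle$ automatically spans the second leg of the (possibly far larger) inclusion $B(H)^{\phi}\subseteq B(H)^{\phi^2}$. This is exactly where the interplay between $w$ and $\phi$ is used: membership $w\in\langle M,e_2\rangle$ forces $Jz_nJ\,w=-w\,Jz_nJ$, and it is this anticommutation, rather than the bare index count, that pins down $\ker F$ inside $B(H)^{\phi^2}$. I expect this identification to be the step of a full proof that requires the most care.
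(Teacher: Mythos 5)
Your argument is correct in outline but follows a genuinely different route from the paper. The paper defines the two\-/sided unital completely positive map $\Phi(T)=\frac{1}{2}\sum_{i,j}m_iF(m_i^*Tm_j)m_j^*$, checks it is the identity on $\langle M,e_2\rangle$, invokes the rigidity Theorem 3.3 to get $\Phi=\mathrm{id}$ on all of $B(H)^{\phi^2}$, and then extracts the one\-/sided expansion from a matrix positivity (Pimsner--Popa) inequality together with a change of basis to a basis $\{n_i\}$ with $n_1=1$, $F(n_i)=0$ for $i\geq 2$. You instead bypass Theorem 3.3 entirely: you exhibit the index\-/$2$ inclusion $\langle M,e_1\rangle\subseteq\langle M,e_2\rangle$ as a ${\mathbb Z}_2$-crossed product with grading unitary $w$, deduce from Lemma 4.3's identity $Jz_nJ\,x=\phi(x)\,Jz_nJ$ (for $x\in\langle M,e_2\rangle$) the anticommutation $Jz_nJ\,w=-w\,Jz_nJ$, and use it to prove the graded decomposition $B(H)^{\phi^2}=B(H)^{\phi}\oplus B(H)^{\phi}w$, after which the expansion is verified on each summand using only the two instances $x=1$ and $x=w$ of the basis property. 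This is arguably more illuminating (it explains \emph{why} a basis of the small inclusion controls the large one: both are ${\mathbb Z}_2$-graded by the same $w$), at the cost of importing Goldman's theorem; the paper's proof is longer but self\-/contained and works uniformly from its rigidity machinery.

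Two points need more care before your proof is complete. First, the step $F(m_i^*c)=F(m_i^*)c$ for $c\in B(H)^{\phi}$ is not justified by the reason you give: the multiplicative\-/domain argument only yields $F(F(m_i^*)c)=F(m_i^*)c$, and you still owe $F\bigl((m_i^*-F(m_i^*))c\bigr)=0$. The clean fix is the identity $\phi(xc)=\phi(x)c$ for all $x\in\langle M,e_2\rangle$ and $c\in B(H)^{\phi}$, which follows at once from $Jz_nJ\,x=\phi(x)\,Jz_nJ$ (proof of Lemma 4.3) since $\phi(xc)=\sum_n Jz_nJ\,xc\,Jz_n^*J=\phi(x)\sum_n Jz_nJ\,c\,Jz_n^*J=\phi(x)c$; hence $F(xc)=F(x)c$, which handles both $\Phi(c)$ and $\Phi(c'w)$ simultaneously and makes your detour through $c''=w^*c'w$ unnecessary for the first summand. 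Second, Goldman's theorem is usually stated for ${\rm II}_1$ factors, whereas $\langle M,e_1\rangle\subseteq\langle M,e_2\rangle$ may be an inclusion of ${\rm II}_\infty$ factors; the existence of a unitary $w\in\langle M,e_2\rangle$ with $F(w)=0$ still holds (reduce by a finite projection of $\langle M,e_1\rangle$, apply Goldman there, and patch using factoriality), but this should be said explicitly rather than cited as a black box. With these two repairs the proof goes through.
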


\begin{proof}
Let 
$$
\Phi(T)=\frac{1}{2}\sum_{i.j=1}^nm_iF(m_i^*Tm_j)m_j^*.
$$
Then $\Phi$ is a normal unital completely positive map on $B(H)^{\phi^2}$. 
Moreover for any  $x\in \langle 
M,e_2
\rangle$, we have 
$$
\Phi(x)=\frac{1}{2}\sum_{j=1}^n\{\sum_{i=1}^nm_iF(m_i^*xm_j)\}m_j^*
=\frac{1}{2}\sum_{j=1}^n xm_jm_j^*=x.
$$ 
Here we used the fact that $\sum_{i=1}^n m_im_i^*=[\langle 
M,e_2
\rangle:\langle 
M,e_1
\rangle]=2$. Then by theorem 3.3, we have $\Phi=id$ on  $B(H)^{\phi^2}$. 
For any positive element $T\in B(H)^{\phi^2}$, 
since $ (F(m_i^*Tm_j))_{i,j}\geq  \frac{1}{2}(m_i^*Tm_j)_{i,j}$, 
we observe that 
\begin{align*}
T&=\Phi(T)=\frac{1}{2}(m_1,\cdots,m_n)\times (F(m_i^*Tm_j))_{i,j}\times (m_1,\cdots,m_n)^*\\
&\geq\frac{1}{2}(m_1,\cdots,m_n)\times \frac{1}{2}(m_i^*Tm_j)_{i,j}\times (m_1,\cdots,m_n)^*
=T.
\end{align*}
Then we obtain 
$$
\{ (F(m_i^*Tm_j))_{i,j}-\frac{1}{2}(m_i^*Tm_j)_{i,j}\}\times(m_1,\cdots,m_n)^*=(0,\cdots,0)^*
$$ and hence 
$$
\sum_{j=1}^nF(m_i^*Tm_j)m_j^*=m_i^*T.
$$
Then for any element $S\in B(H)^{\phi^2}$, 
$$
\sum_{j=1}^nF(m_i^*Sm_j)m_j^*=m_i^*S.
$$ By letting $S=F(m_i)T$, we have 
$$
\sum_{j=1}^nF(m_i^*F(m_i)Tm_j)m_j^*=m_i^*F(m_i)T
$$ 
and hence 
$$
\sum_{j=1}^nF(\{\sum_{i=1}^nm_i^*F(m_i)\}Tm_j)m_j^*=\{\sum_{i=1}^nm_i^*F(m_i)\}T.
$$
Let $e$ be the Jones projection of the 
subfactor $\langle 
M,e_1
\rangle\subset \langle 
M,e_2
\rangle$. Then 
there exists a Pimsner-Popa basis $\{n_i\}_i\subset \langle 
M,e_2
\rangle$ of the inclusion 
$\langle 
M,e_1
\rangle\subset \langle 
M,e_2
\rangle$ such that 
$n_1=1$ and $(1-e)n_ie=0$ for $i\geq2$. 
Since $F(n_i)e=en_ie$, we have $F(n_i)=0$ for $i\geq2$. 
By the above argument we have 
$$
\sum_{j=1}^mF(\{\sum_{i=1}^nn_i^*F(n_i)\}Tn_j)n_j^*=\{\sum_{i=1}^mn_i^*F(n_i)\}T.
$$ 
Then since $\sum_{i=1}^nn_i^*F(n_i)=1$, we obtain 
$
\sum_{j=1}^mF(Tn_j)n_j^*=T.
$ 
Since $m_i=\sum_{j=1}^mn_jF(n_j^*m_i)$, we complute 
\begin{align*}
\sum_{i=1}^nF(Tm_i)m_i^*&=
\sum_{i=1}^n\sum_{j,k=1}^mF(Tn_j)F(n_j^*m_i)F(m_i^*n_k)n_k^*\\
&=\sum_{j,k=1}^mF(Tn_j)F(n_j^*\sum_{i=1}^nm_iF(m_i^*n_k))n_k^*\\
&=\sum_{j,k=1}^mF(Tn_j)F(n_j^*n_k)n_k^*=\sum_{j=1}^mF(Tn_j)n_j^*=T.
\end{align*}
\end{proof}

\begin{lem} 
For any $k$, we have 
$P_{2k}=P_{2}$ and $P_{2k+1}=P_{1}$. 
\end{lem}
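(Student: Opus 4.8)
The goal is to show $P_{2k}=P_2$ and $P_{2k+1}=P_1$ for all $k\geq 1$. The plan is to prove this by induction on $k$, with the crucial input being the relation $P_2\subseteq P_1$ combined with a reverse inclusion obtained from the hypothesis $P_2\neq P_1$ (which, by Proposition 4.5, gives $[P_2:P_1]=2$) and the recursive structure $P_{m+1}=$ (von Neumann algebra generated by $\{z_{n_0}w : n_0\geq 1,\ w\in P_m\}$). First I would record the obvious chain of inclusions $\cdots\subseteq P_3\subseteq P_2\subseteq P_1$: each $P_{k+1}$ is generated by $k+1$-fold products $z_{n_1}\cdots z_{n_{k+1}}$, and summing out the first index using $\sum_{n_1}z_{n_1}z_{n_1}^*=1$ together with $*$-closedness of $\{z_n\}$ shows that every $k$-fold product lies in the von Neumann algebra generated by the $(k+1)$-fold products only after one is careful — actually the natural direction is the opposite, so I would instead just note $P_{k+1}\subseteq P_k$ directly since a $(k+1)$-fold word $z_{n_1}(z_{n_2}\cdots z_{n_{k+1}})$ is a $z_{n_1}$ times a $k$-fold word, hence lies in $P_1\cdot P_k$; but we want it inside $P_k$, so I would argue as in the proof of Proposition 3.5: $w_m := z_{n_2}\cdots z_{n_{k+1}}\in P_k$, and $\sum_{n_1} z_{n_1}(z_{n_1}^* w) = w$ shows $P_k$ is generated by elements $z_{n_1}w$, i.e. $P_k\subseteq$ vN$\{z_{n_1}w\}\subseteq P_{k+1}$ would need $w\in P_{k+1}$... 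The clean statement I actually need is the two-step recursion $P_{k+2}\subseteq P_k$ (a $(k+2)$-word is a product of a $2$-word and a $k$-word, and $P_2\subseteq P_1\subseteq M$, so this needs more care) — so the real content is the identity $P_{k+2}=P_k$, not just an inclusion.

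The key step is therefore: \emph{show $P_{k+2}=P_k$ for all $k\geq 1$}, from which $P_{2k}=P_2$ and $P_{2k+1}=P_1$ follow immediately by induction (base cases $P_2,P_1$ and $P_4=P_2$, $P_3=P_1$). To prove $P_{k+2}=P_k$ I would mimic the argument in Proposition 3.5 and Corollary 3.6: writing $y_{m} := z_{m_1}z_{m_2}$ ranging over all $2$-words, we have $\sum_m y_m y_m^* = \sum_{m_1,m_2} z_{m_1}z_{m_2}z_{m_2}^*z_{m_1}^* = 1$, and similarly $\sum_m y_m^* y_m = 1$, and $\{y_m\}$ is $*$-closed. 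Now $P_{k+2}$ is generated by $\{y_m w : w \text{ a }k\text{-word}\}$, and for each fixed $k$-word $w\in P_k$, the sum $\sum_m y_m^*(y_m w) = w$ exhibits $w$ as lying in the von Neumann algebra generated by $\{y_m w\}\subseteq P_{k+2}$, so $P_k\subseteq P_{k+2}$; the reverse inclusion $P_{k+2}\subseteq P_k$ holds because each generator $y_m w$ of $P_{k+2}$ is a product of $y_m\in P_2\subseteq P_1\subseteq P_k$ (using $k\geq 1$) and $w\in P_k$. Hence $P_{k+2}=P_k$.

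The main obstacle I anticipate is getting the inclusion $P_{k+2}\subseteq P_k$ right: one must know $P_2\subseteq P_k$ for every $k\geq 1$, which in turn relies on the full descending chain $P_1\supseteq P_2\supseteq P_3\supseteq\cdots$, and establishing \emph{that} chain itself requires the same summing-out trick. So the cleanest route is probably to prove simultaneously, by induction on $k$, both (a) $P_{k+1}\subseteq P_k$ and (b) $P_{k+2}=P_k$; once (a) is in hand for all indices up to $k$, statement (b) for index $k$ reduces, as above, to the two inclusions, each a one-line computation using $\sum z_n z_n^*=1$ and $*$-closedness. With $P_{k+2}=P_k$ established, the final assertions $P_{2k}=P_2$ and $P_{2k+1}=P_1$ are immediate, and the standing hypothesis $P_2\neq P_1$ guarantees the two algebras in the periodic pattern are genuinely distinct.
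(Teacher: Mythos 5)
Your overall strategy (two--step periodicity via the summing-out trick $\sum_p z_pz_p^*=1$) is the right idea, but two steps in your final scheme are genuinely wrong. First, statement (a) of your simultaneous induction, $P_{k+1}\subseteq P_k$ for all $k$, is false: under the standing hypothesis $P_2\neq P_1$ the lemma itself gives $P_3=P_1\supsetneq P_2$, so already $P_3\not\subseteq P_2$ and the chain is not descending. Since your proof of $P_{k+2}\subseteq P_k$ is routed through "(a) for all indices up to $k$", the induction as you set it up cannot close. Second, your identity for the reverse inclusion is off: in $\sum_m y_m^*(y_mw)=w$ each term $y_m^*y_mw=z_{m_2}^*z_{m_1}^*z_{m_1}z_{m_2}w$ is a word of length $k+4$, not $k+2$, and in any case left-multiplying the generators $y_mw$ by the external elements $y_m^*$ does not keep you inside the von Neumann algebra generated by $\{y_mw\}$, so the sentence "exhibits $w$ as lying in the von Neumann algebra generated by $\{y_mw\}$" does not follow. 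The correct padding is one-sided and by single pairs: $w=\sum_p w\,z_pz_p^*$, where each $wz_pz_p^*$ is honestly a $(k+2)$-letter word because $z_p^*$ is again one of the generators by $*$-closedness (and the partial sums converge strongly since $\sum_{p\le N}z_pz_p^*\nearrow 1$).

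The paper sidesteps all of this by never invoking a descending chain: the inclusions $P_{2k}\subseteq P_2$ and $P_{2k+1}\subseteq P_1$ are genuinely trivial, since a word of length $2k$ (resp.\ $2k+1$) factors into $k$ two-letter blocks (resp.\ $2k+1$ one-letter blocks), each lying in $P_2$ (resp.\ $P_1$); only the reverse inclusions need the padding identity
$z_{n_1}z_{n_2}=\sum_{p_1,\dots,p_{k-1}}z_{n_1}z_{n_2}(z_{p_1}z_{p_1}^*)\cdots(z_{p_{k-1}}z_{p_{k-1}}^*)$,
whose terms are exact $2k$-letter words, and the analogous identity for $z_n$ with $k$ pairs. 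Note also that the index computation $[P_2:P_1]=2$ and the hypothesis $P_2\neq P_1$, which you cite as crucial input, play no role in this lemma; they only ensure afterwards that the two algebras in the periodic pattern are distinct. If you replace your step (a) by the trivial block decomposition and your $y_m$-identity by the one-sided $z_pz_p^*$ padding, your argument becomes the paper's proof.
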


\begin{proof}
We have only to show that $P_{2}\subseteq P_{2k}$ and $P_{1}\subseteq P_{2k+1}$. 
We observe that 
$$
P_2\ni z_{n_1}z_{n_2}=
\sum_{p_1,\cdots,p_{k-1}=1}^\infty
z_{n_1}z_{n_2}(z_{p_1}{z_{p_1}}^*)\cdots(z_{p_{k-1}}{z_{p_{k-1}}}^*)\in P_{2k}.
$$
Thus we have $P_{2}\subseteq P_{2k}$. Similarly we see that  $P_{1}\subseteq P_{2k+1}$. 

\end{proof}

\begin{prop}
$B(H)^{\phi^{2k}}=B(H)^{\phi^2}$ and $B(H)^{\phi^{2k+1}}=B(H)^{\phi}$. 
\end{prop}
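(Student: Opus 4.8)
The plan is to reduce the statement to what has already been established for the depth-two step $\phi \subseteq \phi^2$ and then iterate. The key structural fact from Lemma 4.6 is that $P_{2k} = P_2$ and $P_{2k+1} = P_1$, so in particular $\langle M, e_{2k}\rangle = \langle M, e_2\rangle$ and $\langle M, e_{2k+1}\rangle = \langle M, e_1\rangle$. Since $\phi^{2k} \in G$ with associated von Neumann algebra $P_{2k} = P_2$, Theorem 3.3 gives the rigidity of the inclusion $\langle M, e_2\rangle \subseteq B(H)^{\phi^{2k}}$; similarly for $\phi^{2k+1}$ and $\langle M, e_1\rangle$.

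For the even case, I would argue as follows. We have $B(H)^{\phi^2} \subseteq B(H)^{\phi^{2k}}$ by the inclusion diagram at the start of Section 4 (with $m = k$). To get the reverse inclusion, it suffices to show that $\phi^2$ is the identity on $B(H)^{\phi^{2k}}$. Now $\phi^2$ leaves $B(H)^{\phi^{2k}}$ invariant (since $\phi^2$ commutes with $\phi^{2k}$), and $\phi^2 = \mathrm{id}$ on $\langle M, e_2\rangle$ because $\langle M, e_2\rangle$ lies in the fixed-point algebra of $\phi^2$. By the rigidity of $\langle M, e_2\rangle \subseteq B(H)^{\phi^{2k}}$ from Theorem 3.3, the map $\phi^2$, being a normal unital completely positive self-map of $B(H)^{\phi^{2k}}$ that restricts to the identity on $\langle M, e_2\rangle$, must be the identity on all of $B(H)^{\phi^{2k}}$. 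Hence $B(H)^{\phi^{2k}} \subseteq B(H)^{\phi^2}$, and equality follows.

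For the odd case, the same argument applies with $2$ replaced by $1$: we have $B(H)^{\phi} \subseteq B(H)^{\phi^{2k+1}}$ from the inclusion diagram, and $\phi$ leaves $B(H)^{\phi^{2k+1}}$ invariant and is the identity on $\langle M, e_1\rangle = \langle M, e_{2k+1}\rangle$. Applying Theorem 3.3 to the rigid inclusion $\langle M, e_1\rangle \subseteq B(H)^{\phi^{2k+1}}$ forces $\phi = \mathrm{id}$ on $B(H)^{\phi^{2k+1}}$, so $B(H)^{\phi^{2k+1}} \subseteq B(H)^{\phi}$ and we get equality.

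I do not anticipate a genuine obstacle here: the essential content has been front-loaded into Lemma 4.6 (the stabilization of the algebras $P_j$) and Theorem 3.3 (the rigidity), and this proposition is the clean corollary. The one point to be careful about is checking that $\phi^2$ (resp. $\phi$) really does map $B(H)^{\phi^{2k}}$ (resp. $B(H)^{\phi^{2k+1}}$) into itself, which follows because $\phi$ commutes with all its powers; and that the hypotheses of Theorem 3.3 are literally met, namely that $\langle M, e_{2k}\rangle \subseteq B(H)^{\phi^{2k}} \subseteq B(H)^{\phi^{2k}}$ is a $\langle M, e_{2k}\rangle$-bimodule and $\phi^{2k}$-related quantities are the identity there — all of which is immediate from $P_{2k} = P_2$.
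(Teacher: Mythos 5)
Your proof is correct and follows essentially the same route as the paper: both rest on Lemma 4.6 (giving $P_{2k}=P_2$, hence $\langle M,e_{2k}\rangle=\langle M,e_2\rangle$) combined with the rigidity of Theorem 3.3. The only cosmetic difference is that the paper applies Theorem 3.3 to the averaged conditional expectation $F=\frac{1}{k}\bigl(\mathrm{id}+\phi^2+\cdots+\phi^{2(k-1)}\bigr)$ onto $B(H)^{\phi^2}$, whereas you apply it directly to $\phi^2$ itself; both arguments yield $B(H)^{\phi^{2k}}\subseteq B(H)^{\phi^2}$ in the same way.
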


\begin{proof}
By the previous lemma, we have $P_{2k}=P_{2}$. Then we have the inclusions 
$$
\begin{matrix}
P_{2k}&=& P_2&\subseteq &M&
\subseteq
&\langle 
M,e_2
\rangle&
=
&\langle 
M,e_{2k}
\rangle\\
 & & & & & &\cap& &\cap\\
 & & & & & & 
B(H)^{\phi^2}&\subseteq &B(H)^{\phi^{2k}}
\end{matrix}
$$ 
We have a normal conditional expectation 
$$
F(T)=\frac{1}{k}(T+\phi^2(T)+\cdots+\phi^{2(k-1)})
$$
from $B(H)^{\phi^{2k}}$ onto $B(H)^{\phi^{2}}$. 
Then by theorem 3.3, we have $F=id$ and hence 
$B(H)^{\phi^{2k}}=B(H)^{\phi^2}$. 
By the same way, we can see that $B(H)^{\phi^{2k+1}}=B(H)^{\phi}$. 
\end{proof}

\ \\ 

Next we consider two completely positive maps 
$\phi=\sum_{n=1}^\infty{\rm Ad}Jz_nJ\in G$ $(z_n\in M)$ 
and 
$\psi=\sum_{m=1}^\infty{\rm Ad}Jw_mJ\in G$ $(w_m\in M)$ 
such that  
$\sum_{n=1}^\infty z_nz_n^*=\sum_{n=1}^\infty z_n^*z_n=
\sum_{m=1}^\infty w_mw_m^*=\sum_{m=1}^\infty w_m^*w_m
=1$ 
and both $\{z_n\}_{n=1}^\infty$ and $\{w_m\}_{m=1}^\infty$ are 
$*$-closed sets. We also assume that $z_nw_m=w_mz_n$ for any $n,m$. 
Thus we have $\phi\circ\psi=\psi\circ\phi$. 
Let $P_1$ and $P_2$ be von Neumann algebras generated by 
$\{z_n\}_{n=1}^\infty$ and 
$\{z_{n_1}z_{n_2}\}_{n_1,n_2=1}^\infty$ respectively. 
Similarly, let $Q_1$ and $Q_2$ be von Neumann algebras generated by 
$\{w_m\}_{m=1}^\infty$ and 
$\{w_{m_1}w_{m_2}\}_{m_1,m_2=1}^\infty$ respectively. 
We further assume that ${P_2}'\cap P_1={Q_2}'\cap Q_1={\Bbb C}$, 
$P_2\not=P_1$ and $Q_2\not=Q_1$. 
Let $e_1$ and $e_2$ be the Jones projection onto $L^2(P_1\vee Q_1,\tau)$ 
and  $L^2(P_2\vee Q_2,\tau)$ respectively. 
We remark that 
$P_1\vee Q_1\simeq P_1\otimes Q_1$ and 
$P_2\vee Q_2\simeq P_2\otimes Q_2$. Then we have the following inclusions. 
$$
\begin{matrix}
P_2\vee Q_2&\subseteq& P_1\vee Q_1&\subseteq &M&
\subseteq
&\langle 
M,e_1
\rangle&
\subseteq
&\langle 
M,e_{2}
\rangle\\
 & & & & & &\cap& &\cap\\
 & & & & & & 
B(H)^{\phi}\cap B(H)^{\psi}&\subseteq &B(H)^{\phi^{2}}\cap B(H)^{\psi^2}.
\end{matrix}
$$ 
Here we remark that by proposition 3.4 and corollary 3.6, we have 
$B(H)^{\phi}\cap B(H)^{\psi}=B(H)^{\frac{1}{2}(\phi+\psi)}$ and 
$B(H)^{\phi^{2}}\cap B(H)^{\psi^2}=B(H)^{\phi^2\psi^2}$. 

\begin{lem}
The inclusions $B(H)^{\phi}\cap B(H)^{\psi}\subseteq 
B(H)^{\phi\psi}\subseteq 
B(H)^{\phi^2\psi^2}$ is isomorphic to 
inclusions of von Neumann algebras as operator systems. 
\end{lem}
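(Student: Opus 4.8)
The plan is to reduce the three–term chain to Lemma 4.1 applied to the single map $\phi\psi$, together with the observation that $B(H)^{\phi}\cap B(H)^{\psi}$ sits inside $B(H)^{\phi\psi}$ as a von Neumann subalgebra. First I would record that $\phi\psi\in G$: since $\{z_n\}$ and $\{w_m\}$ are $*$-closed and mutually commuting, $\{z_nw_m\}_{n,m}$ is again $*$-closed and $\sum_{n,m}z_nw_m(z_nw_m)^{*}=\sum_{n,m}(z_nw_m)^{*}z_nw_m=1$, so $\phi\psi=\sum_{n,m}{\rm Ad}\,J(z_nw_m)J\in G$. Moreover the displayed inclusions are genuine: if $\phi(T)=\psi(T)=T$ then $\phi\psi(T)=\phi(T)=T$, so $B(H)^{\phi}\cap B(H)^{\psi}\subseteq B(H)^{\phi\psi}$, and $B(H)^{\phi\psi}\subseteq B(H)^{(\phi\psi)^{2}}=B(H)^{\phi^{2}\psi^{2}}$ because $\phi$ and $\psi$ commute.

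Next I would apply Lemma 4.1 to $\phi\psi\in G$ (with $k=1$, $m=2$): this produces a Hilbert space $K\supset H$ and a unital normal $*$-endomorphism $\gamma$ on $B(K)$ with $e_{H}\gamma^{n}(T)e_{H}=(\phi\psi)^{n}(T)$ for all $T\in B(H)$, and it gives that the compression $\Theta\colon X\mapsto e_{H}Xe_{H}$ carries the inclusion $B(K)^{\gamma}\subseteq B(K)^{\gamma^{2}}$ onto the inclusion $B(H)^{\phi\psi}\subseteq B(H)^{(\phi\psi)^{2}}=B(H)^{\phi^{2}\psi^{2}}$ as an isomorphism, the Choi--Effros products on the right matching the honest operator products on the fixed-point algebras $B(K)^{\gamma}$ and $B(K)^{\gamma^{2}}$ of the $*$-endomorphisms $\gamma$ and $\gamma^{2}$. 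In particular $B(K)^{\gamma}$ and $B(K)^{\gamma^{2}}$ are honest von Neumann algebras.

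It therefore remains to exhibit a von Neumann subalgebra $\mathcal{N}\subseteq B(K)^{\gamma}$ with $\Theta(\mathcal{N})=B(H)^{\phi}\cap B(H)^{\psi}$. Since $\Theta$ is a $*$-isomorphism onto $B(H)^{\phi\psi}$ equipped with its Choi--Effros product, it suffices to show that $B(H)^{\phi}\cap B(H)^{\psi}$ is a von Neumann subalgebra of $B(H)^{\phi\psi}$, and then set $\mathcal{N}=\Theta^{-1}(B(H)^{\phi}\cap B(H)^{\psi})$. For this, note that $\phi$ and $\psi$ both map $B(H)^{\phi\psi}$ into itself: if $\phi\psi(T)=T$, then $\phi\psi(\phi(T))=\phi(\phi\psi(T))=\phi(T)$ by commutativity, and similarly for $\psi$. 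On $B(H)^{\phi\psi}$ the maps $\phi,\psi$ are unital normal completely positive, and $\phi\psi=\psi\phi$ acts there as the identity, so $\psi$ is a two-sided inverse of $\phi$ on $B(H)^{\phi\psi}$. A unital completely positive bijection of a von Neumann algebra whose inverse is again completely positive is (being a unital surjective isometry) a Jordan $*$-isomorphism by Kadison's theorem, hence, by complete positivity, a $*$-automorphism. Thus $\phi,\psi$ restrict to a commuting pair of $*$-automorphisms of $B(H)^{\phi\psi}$, and $B(H)^{\phi}\cap B(H)^{\psi}=\{T\in B(H)^{\phi\psi}:\phi(T)=\psi(T)=T\}$ is their common fixed-point algebra, hence a von Neumann subalgebra.

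Putting this together, $\mathcal{N}\subseteq B(K)^{\gamma}\subseteq B(K)^{\gamma^{2}}$ is a chain of von Neumann algebras, and the single compression $X\mapsto e_{H}Xe_{H}$ restricts to operator-system isomorphisms $\mathcal{N}\to B(H)^{\phi}\cap B(H)^{\psi}$, $B(K)^{\gamma}\to B(H)^{\phi\psi}$, $B(K)^{\gamma^{2}}\to B(H)^{\phi^{2}\psi^{2}}$, compatible with the inclusions; this is exactly the assertion. The step I expect to be the most delicate is the identification of $B(H)^{\phi}\cap B(H)^{\psi}$ as a von Neumann subalgebra of $B(H)^{\phi\psi}$: it rests on the fact that the Prunaru compression matches the Choi--Effros structure on $B(H)^{\phi\psi}$ with the honest von Neumann structure on $B(K)^{\gamma}$ (already implicit in Lemma 4.1), and on the classical fact that a unital $2$-positive bijection with $2$-positive inverse is a surjective isometry, hence a Jordan $*$-isomorphism. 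A more symmetric alternative would be to dilate the commuting pair $(\phi,\psi)$ jointly to a commuting pair of unital normal $*$-endomorphisms $(\alpha,\beta)$ on some $B(K)$ and compress the chain of von Neumann algebras $B(K)^{\alpha}\cap B(K)^{\beta}\subseteq B(K)^{\alpha\beta}\subseteq B(K)^{\alpha^{2}\beta^{2}}$ by $e_{H}$; there the burden shifts to justifying the existence of such a joint dilation.
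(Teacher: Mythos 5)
Your argument is correct, but your primary route differs from the paper's. The paper's proof is exactly the ``more symmetric alternative'' you mention in your last sentence: since $\phi\circ\psi=\psi\circ\phi$, the pair is dilated \emph{simultaneously} to commuting normal $*$-endomorphisms (citing Bhat's generalized intertwining lifting theorem and Solel's work on dilations of commuting CP maps), and then Prunaru's theorem compresses the whole chain $B(K)^{\alpha}\cap B(K)^{\beta}\subseteq B(K)^{\alpha\beta}\subseteq B(K)^{\alpha^{2}\beta^{2}}$ at once; that is the entire proof. Your main route instead reuses only the single-map dilation already set up in Lemma 4.1, applied to $\phi\psi\in G$, which handles $B(H)^{\phi\psi}\subseteq B(H)^{\phi^{2}\psi^{2}}$ directly but leaves the leftmost term to be accounted for separately; you close that gap by observing that $\phi$ and $\psi$ restrict to mutually inverse unital normal completely positive maps of the Choi--Effros von Neumann algebra $B(H)^{\phi\psi}$, hence (by the Kadison/Choi rigidity for u.c.p.\ bijections with u.c.p.\ inverse, or more directly by the multiplicative-domain trick applied to $\psi\circ\phi=\mathrm{id}$) to commuting normal $*$-automorphisms, whose common fixed-point algebra $B(H)^{\phi}\cap B(H)^{\psi}$ is then a genuine von Neumann subalgebra. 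The trade-off: the paper's argument is two lines but leans on the existence of joint dilations for commuting CP maps; yours avoids that input at the cost of the automorphism argument, and has the small bonus of exhibiting $B(H)^{\phi}\cap B(H)^{\psi}$ intrinsically as a fixed-point subalgebra of $B(H)^{\phi\psi}$ (which also makes transparent that its own Choi--Effros product agrees with the restricted one). Both are sound; just make sure, if you keep your version, to state explicitly that complete positivity of $\phi,\psi$ relative to the Choi--Effros C$^{*}$-structure on $B(H)^{\phi\psi}$ is automatic because the order structure there is the one inherited from $B(H)$.
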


\begin{proof}
Since $\phi\circ\psi=\psi\circ\phi$, $\phi$ and $\psi$ 
can be dilated 
simultaneously 
\cite{B98,S}. Then by the Prunaru theorem \cite{Pru12}, we are done. 
\end{proof}

\begin{lem}
The inclusion $B(H)^{\phi}\cap B(H)^{\psi}\subseteq 
B(H)^{\phi^2\psi^2}$ is irreducible.
\end{lem}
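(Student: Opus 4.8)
The plan is to mimic the proof of Lemma 4.2, which handled the case of a single $\phi$, but now using the conditional expectation structure coming from the commuting pair $\phi,\psi$. The target inclusion is $B(H)^{\phi}\cap B(H)^{\psi}\subseteq B(H)^{\phi^2\psi^2}$, and we must show the relative commutant is trivial. I would begin by computing relative commutants against $M$ using Theorem 2.3: for each of the maps $\phi^i\psi^j$ appearing (which all lie in $G$ since $z_nw_m=w_mz_n$), Theorem 2.3 gives $M'\cap B(H)^{\phi^i\psi^j}=M'\cap\langle M,e_{R}\rangle$ where $R$ is the von Neumann algebra generated by the relevant products $z_{n_1}\cdots z_{n_i}w_{m_1}\cdots w_{m_j}$. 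In particular, $M'\cap(B(H)^{\phi}\cap B(H)^{\psi})=M'\cap\langle M,e_1\rangle$ (using Proposition 3.4 to identify $B(H)^\phi\cap B(H)^\psi=B(H)^{\frac12(\phi+\psi)}$ and noting that $\frac12(\phi+\psi)$ is generated by $\{\frac{1}{\sqrt 2}z_n,\frac{1}{\sqrt 2}w_m\}$, hence $P_1\vee Q_1$), and $M'\cap B(H)^{\phi^2\psi^2}=M'\cap\langle M,e_2\rangle$ (using Corollary 3.6 to get $B(H)^{\phi^2\psi^2}=B(H)^{\phi^2}\cap B(H)^{\psi^2}$, whose associated algebra is $P_2\vee Q_2$, as recorded in the diagram just before the statement).

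**Key steps.** With these identifications in hand, I would run the chain of inclusions exactly as in Lemma 4.2:
\begin{align*}
(B(H)^{\phi}\cap B(H)^{\psi})'\cap B(H)^{\phi^2\psi^2}
&=(B(H)^{\phi}\cap B(H)^{\psi})'\cap M'\cap B(H)^{\phi^2\psi^2}\\
&=(B(H)^{\phi}\cap B(H)^{\psi})'\cap M'\cap\langle M,e_2\rangle\\
&\subseteq\langle M,e_1\rangle'\cap\langle M,e_2\rangle,
\end{align*}
where the first equality uses $M\subseteq B(H)^{\phi}\cap B(H)^{\psi}$, the second uses the computation above, and the inclusion uses $\langle M,e_1\rangle\subseteq B(H)^{\phi}\cap B(H)^{\psi}$. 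It then remains to show $\langle M,e_1\rangle'\cap\langle M,e_2\rangle=\mathbb{C}$. Using $\langle M,e_i\rangle=(J(P_i\vee Q_i)J)'$, this relative commutant equals $J(P_1\vee Q_1)J\cap (J(P_2\vee Q_2)J)'=J\big((P_1\vee Q_1)\cap(P_2\vee Q_2)'\big)J$. Invoking $P_i\vee Q_i\simeq P_i\otimes Q_i$ and the commuting tensor-factor structure (the $z$'s commute with the $w$'s), this splits as $(P_1\cap P_2')\otimes(Q_1\cap Q_2')=\mathbb{C}\otimes\mathbb{C}=\mathbb{C}$ by the standing assumptions $P_2'\cap P_1=Q_2'\cap Q_1=\mathbb{C}$.

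**Main obstacle.** The routine part is the relative-commutant bookkeeping; the one place requiring genuine care is justifying $(P_1\vee Q_1)\cap(P_2\vee Q_2)'=(P_1\cap P_2')\otimes(Q_1\cap Q_2')$ inside $M$. This needs the fact that $P_1\vee Q_1$ is canonically isomorphic to the von Neumann tensor product $P_1\bar\otimes Q_1$ with $P_1$, $Q_1$ as the commuting factors (and similarly $P_2\vee Q_2$ sits inside compatibly), so that a commutation computation in the tensor product applies; since $P_2\subseteq P_1$ and $Q_2\subseteq Q_1$ are subalgebras of the respective tensor legs, the relative commutant of $P_2\bar\otimes Q_2$ inside $P_1\bar\otimes Q_1$ does factor as stated by the commutation theorem for tensor products. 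I would state this identification explicitly, cite the tensor-splitting already noted in the text ($P_i\vee Q_i\simeq P_i\otimes Q_i$), and then the conclusion $\mathbb{C}$ is immediate from the hypotheses ${P_2}'\cap P_1={Q_2}'\cap Q_1=\mathbb{C}$. This completes the proof.
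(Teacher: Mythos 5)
Your proposal is correct and follows essentially the same route as the paper: the identical chain of relative commutants reducing to $\langle M,e_1\rangle'\cap\langle M,e_2\rangle=J\bigl((P_2\vee Q_2)'\cap(P_1\vee Q_1)\bigr)J$, which the paper then declares to be $\mathbb{C}$. The only difference is that you spell out the last step via the tensor splitting $(P_2\bar\otimes Q_2)'\cap(P_1\bar\otimes Q_1)=(P_2'\cap P_1)\bar\otimes(Q_2'\cap Q_1)$, a detail the paper leaves implicit.
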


\begin{proof}
By theorem 2.3, we have $M'\cap B(H)^{\phi^2\psi^2}=M'\cap \langle 
M,e_2
\rangle$. Then we have 
$
(B(H)^{\phi}\cap B(H)^{\psi})'\cap B(H)^{\phi^2\psi^2}=(B(H)^{\phi}\cap B(H)^{\psi})'
\cap M'\cap B(H)^{\phi^2\psi^2}
=(B(H)^{\phi}\cap B(H)^{\psi})'\cap M'\cap \langle M,e_2\rangle\subseteq \langle 
M,e_1\rangle'\cap \langle M,e_2\rangle=J(P_2\vee Q_2)'\cap (P_1\vee Q_1)J={\Bbb C}
$. 
\end{proof}

\begin{lem}
We set $F(T)=\frac{1}{4}(T+\phi(T)+\psi(T)+\phi\circ\psi(T))$. Then $F$ is a normal conditional 
expectation from $B(H)^{\phi^2\psi^2}$ onto $B(H)^{\phi}\cap B(H)^{\psi}$. Moreover 
$F(\langle 
M,e_2
\rangle)\subseteq\langle 
M,e_1
\rangle$.  
\end{lem}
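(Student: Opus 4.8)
The plan is to verify the three assertions of the lemma in turn. For the conditional-expectation claim, recall from the remark above that $B(H)^{\phi^2\psi^2}=B(H)^{\phi^2}\cap B(H)^{\psi^2}$, so every $T\in B(H)^{\phi^2\psi^2}$ satisfies $\phi^2(T)=T$ and $\psi^2(T)=T$. Using $\phi\circ\psi=\psi\circ\phi$ one computes directly that $\phi(F(T))=\frac14(\phi(T)+\phi^2(T)+\phi\psi(T)+\phi^2\psi(T))=\frac14(\phi(T)+T+\phi\psi(T)+\psi(T))=F(T)$, and symmetrically $\psi(F(T))=F(T)$; hence $F$ maps $B(H)^{\phi^2\psi^2}$ into $B(H)^\phi\cap B(H)^\psi$. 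Conversely, if $\phi(T)=\psi(T)=T$ then $\phi\psi(T)=T$ as well and so $F(T)=T$, so $F$ is the identity on $B(H)^\phi\cap B(H)^\psi$, which is contained in $B(H)^{\phi^2\psi^2}$. Being a finite convex combination of the normal unital completely positive maps $\mathrm{id},\phi,\psi,\phi\psi$, the map $F$ is itself normal, unital and completely positive, and the two facts just proved exhibit it as an idempotent onto $B(H)^\phi\cap B(H)^\psi$; since by lemma 4.8 this is an inclusion of von Neumann algebras as operator systems, $F$ is a normal conditional expectation.

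For the last assertion, fix $T\in\langle M,e_2\rangle=(J(P_2\vee Q_2)J)'$. Because $z_{n_1}z_{n_2}\in P_2\subseteq P_2\vee Q_2$ and $w_{m_1}w_{m_2}\in Q_2\subseteq P_2\vee Q_2$, the operator $T$ commutes with every $J(z_{n_1}z_{n_2})J$ and every $J(w_{m_1}w_{m_2})J$, and of course $Jz_nJ$ commutes with $Jw_mJ$. Arguing as in the proof of lemma 4.3 one then gets, for each $k$, the identities $Jz_kJ\,\phi(T)=\sum_n J(z_kz_n)J\,T\,Jz_n^*J=T\sum_n J(z_kz_nz_n^*)J=TJz_kJ$ and (using that $\langle M,e_2\rangle$ and $\{z_n\}_n$ are $*$-closed) $Jz_kJ\,T=\phi(T)Jz_kJ$. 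Commuting $Jz_kJ$ past each $Jw_mJ$ and applying the first identity gives $Jz_kJ\,\psi(T)=\sum_m Jw_mJ\,\phi(T)Jz_kJ\,Jw_m^*J=\phi\psi(T)Jz_kJ$; and writing $\phi\psi(T)=\sum_n Jz_nJ\,\psi(T)\,Jz_n^*J$ and observing that $\psi(T)$ again commutes with every $J(z_kz_n)J$ (since $T$ does and $Jw_mJ$ does) yields $Jz_kJ\,\phi\psi(T)=\sum_n J(z_kz_n)J\,\psi(T)\,Jz_n^*J=\psi(T)Jz_kJ$. Adding these four identities, $Jz_kJ\,F(T)=F(T)Jz_kJ$ for all $k$, and by the symmetry of $F$ under $\phi\leftrightarrow\psi$, $z_n\leftrightarrow w_m$, also $Jw_lJ\,F(T)=F(T)Jw_lJ$ for all $l$. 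Hence $F(T)\in\{Jz_nJ,Jw_mJ\}_{n,m}'=(J(P_1\vee Q_1)J)'=\langle M,e_1\rangle$.

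I expect the one genuinely delicate point to be the identity for $Jz_kJ\,\phi\psi(T)$: one must check that the intermediate operator $\psi(T)$ still commutes with the degree-two words $J(z_kz_n)J$ — which is legitimate only because $T\in\langle M,e_2\rangle$ commutes with those words and $Jw_mJ$ commutes with them — rather than attempting to commute $Jz_kJ$ past an individual $Jz_nJ$ against $T$, which is not permitted. Beyond this bookkeeping about which ``level'' of commutation is invoked (together with the basic relation $JaJ\cdot JbJ=J(ab)J$), the computation is a routine two-variable version of lemma 4.3.
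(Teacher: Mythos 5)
Your proof is correct and follows essentially the same route as the paper: you spell out the ``obvious'' conditional-expectation claim via the identity $B(H)^{\phi^2\psi^2}=B(H)^{\phi^2}\cap B(H)^{\psi^2}$, and for the commutation with $Jz_kJ$ you use exactly the paper's key observation that $\psi(T)$ (there, $\frac{id+\psi}{2}(T)$) still commutes with the degree-two words $Jz_{n_1}z_{n_2}J$, so that the Lemma 4.3 computation applies. The only difference is presentational: the paper compresses your four termwise identities by factoring $F=\bigl(\frac{id+\phi}{2}\bigr)\circ\bigl(\frac{id+\psi}{2}\bigr)$ and invoking Lemma 4.3 twice, which is the same argument.
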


\begin{proof} Since $B(H)^{\phi^{2}}\cap B(H)^{\psi^2}=B(H)^{\phi^2\psi^2}$, 
the first statement is obvious. 
We will show that $F(\langle 
M,e_2
\rangle)\subseteq\langle 
M,e_1
\rangle$. 
We remark that 
$$
F(T)=\Bigl(
\dfrac{id+\phi}{2}
\Bigr)\circ\Bigl(
\dfrac{id+\psi}{2}
\Bigr)(T)=\Bigl(
\dfrac{id+\psi}{2}
\Bigr)\circ\Bigl(
\dfrac{id+\phi}{2}
\Bigr)(T)
$$
For any $T\in \langle 
M,e_2
\rangle=(\{Jz_{n_1}z_{n_2}J\}_{n_1,n_2=1}^\infty)'
\cap (\{Jw_{m_1}w_{m_2}J\}_{m_1,m_2=1}^\infty)'
$, 
since $\Bigl(
\dfrac{id+\psi}{2}
\Bigr)(T)\in (\{Jz_{n_1}z_{n_2}J\}_{n_1,n_2=1}^\infty)'$, 
by the proof of lemma 4.3, 
we have 
$
Jz_kJF(T)=F(T)Jz_kJ
$. Similarly we have $
Jw_kJF(T)=F(T)Jw_kJ
$ and hence $F(\langle 
M,e_2
\rangle)\subseteq\langle 
M,e_1
\rangle$.

\end{proof}

\begin{prop}
$[P_1\vee Q_1:P_2\vee Q_2]=[B(H)^{\phi^2\psi^2}:B(H)^{\phi}\cap B(H)^{\psi}]=4$.
\end{prop}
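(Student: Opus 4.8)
The plan is to establish the two index values separately and then observe they coincide. For the subfactor $P_2\vee Q_2\subseteq P_1\vee Q_1$, I would use the tensor-product identifications $P_1\vee Q_1\simeq P_1\otimes Q_1$ and $P_2\vee Q_2\simeq P_2\otimes Q_2$ together with the multiplicativity of the Jones index under tensor products. Since Proposition 4.4 (applied to $\phi$ and to $\psi$ separately) gives $[P_1:P_2]=[Q_1:Q_2]=2$, we get $[P_1\vee Q_1:P_2\vee Q_2]=[P_1:P_2]\,[Q_1:Q_2]=4$.

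For the other inclusion, I would run the Pimsner–Popa argument exactly as in the proof of Proposition 4.4, now using the conditional expectation $F$ of Lemma 4.9. First, from $F(T)=\bigl(\frac{id+\phi}{2}\bigr)\circ\bigl(\frac{id+\psi}{2}\bigr)(T)$ one reads off, for positive $T\in B(H)^{\phi^2\psi^2}$, the bound $F(T)\geq\frac14 T$; by the Pimsner–Popa theorem this forces $[B(H)^{\phi^2\psi^2}:B(H)^{\phi}\cap B(H)^{\psi}]\leq 4$. To pin down the exact value I would combine two facts: (i) by Lemma 4.8 this inclusion is irreducible, hence $F$ is the unique normal conditional expectation, and the analogous uniqueness holds for $\langle M,e_1\rangle\subseteq\langle M,e_2\rangle$, so a Pimsner–Popa basis $\{m_i\}$ of the latter can be transported — via Proposition 4.5, whose proof carries over verbatim once Lemma 4.9 supplies $F(\langle M,e_2\rangle)\subseteq\langle M,e_1\rangle$ — to a reconstruction formula $T=\sum_i m_i F(m_i^*T)$ valid on all of $B(H)^{\phi^2\psi^2}$; and (ii) $[\langle M,e_1\rangle:\langle M,e_2\rangle]=[P_1\vee Q_1:P_2\vee Q_2]=4$, so $\sum_i m_im_i^*=4$. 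From the reconstruction formula and $\sum_i m_im_i^*=4$ one computes $F(1)=1$ and, running the matrix-positivity trick of Proposition 4.5 (or directly estimating $\sum_i m_i F(m_i^*Tm_i)$), obtains $F(T)\geq\frac14 T$ with the constant sharp, i.e. the Pimsner–Popa constant of $B(H)^{\phi}\cap B(H)^{\psi}\subseteq B(H)^{\phi^2\psi^2}$ equals $4$, giving the index exactly $4$.

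Alternatively — and this is probably the cleanest route — I would invoke Lemma 4.7: the inclusion $B(H)^{\phi}\cap B(H)^{\psi}\subseteq B(H)^{\phi^2\psi^2}$ is, as operator systems, isomorphic to an inclusion of von Neumann algebras, and under that isomorphism $F$ goes to a normal conditional expectation with Pimsner–Popa constant $\geq\frac14$; then the index is at most $4$, while the reconstruction formula with basis $\{m_i\}$, $\sum m_im_i^*=4$, shows it is exactly $4$. Either way, the conclusion is $[B(H)^{\phi^2\psi^2}:B(H)^{\phi}\cap B(H)^{\psi}]=4=[P_1\vee Q_1:P_2\vee Q_2]$.

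The main obstacle I anticipate is the honest transport of the Pimsner–Popa basis: one must check that the reconstruction identity $T=\sum_i m_i F(m_i^*T)$, established on the intermediate von Neumann algebra $\langle M,e_2\rangle$, really does extend to every $T\in B(H)^{\phi^2\psi^2}$. In Proposition 4.5 this extension was the substantive step, powered by Theorem 3.3 (rigidity) applied to the completely positive map $\Phi(T)=\frac12\sum_{i,j} m_i F(m_i^*Tm_j)m_j^*$ — here one needs the analogous $\Phi(T)=\frac14\sum_{i,j} m_i F(m_i^*Tm_j)m_j^*$ and must verify $\Phi=id$ on $\langle M,e_2\rangle$ (using $\sum_i m_im_i^*=4$) before quoting Theorem 3.3. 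Once that is in place the rest is the same positivity bookkeeping as in Proposition 4.5, and I would simply say "by the argument of Proposition 4.5" rather than repeat it.
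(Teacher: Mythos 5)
Your proposal is correct, and the first half (tensor multiplicativity for $[P_1\vee Q_1:P_2\vee Q_2]=4$ and the Pimsner--Popa bound $F(T)\geq\frac14T$ for the upper estimate) is exactly what the paper does. Where you diverge is the lower bound. The paper's argument is a short restriction trick: by the Pimsner--Popa theorem $F(x)\geq\lambda x$ with $\lambda=1/[B(H)^{\phi^2\psi^2}:B(H)^{\phi}\cap B(H)^{\psi}]$ for \emph{every} positive $x$, in particular for $x\in\langle M,e_2\rangle$; since Lemma 4.9 gives $F(\langle M,e_2\rangle)\subseteq\langle M,e_1\rangle$ and the optimal Pimsner--Popa constant of the (irreducible) inclusion $\langle M,e_1\rangle\subseteq\langle M,e_2\rangle$ is $1/[\langle M,e_2\rangle:\langle M,e_1\rangle]=1/4$, one gets $\lambda\leq 1/4$, i.e.\ index $\geq 4$, with no further machinery. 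Your route instead transports a Pimsner--Popa basis $\{m_i\}$ with $\sum_i m_im_i^*=4$ to a quasi-basis for $F$ on all of $B(H)^{\phi^2\psi^2}$ via the rigidity theorem (an analogue of Proposition 4.5 for the map $\Phi(T)=\frac14\sum_{i,j}m_iF(m_i^*Tm_j)m_j^*$, using that $\phi^2\psi^2\in G$ with generating algebra $P_2\vee Q_2$ so that Theorem 3.3 applies, and that $F-\frac14\,id=\frac14(\phi+\psi+\phi\psi)$ is completely positive for the matrix-positivity step). That does work and in fact proves strictly more --- it is the two-parameter analogue of Proposition 4.5, which the paper never states --- but it is considerably heavier than needed for the index computation, and you correctly identify its one genuine burden: verifying that the extension of the reconstruction identity from $\langle M,e_2\rangle$ to $B(H)^{\phi^2\psi^2}$ goes through. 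One small caution on your phrasing: merely having $F(T)\geq\frac14T$ does not make the constant ``sharp''; what actually pins the index at $4$ in your approach is the quasi-basis identity together with $\sum_i m_im_i^*=4$ (equivalently, the restriction argument above), so you should lean on that rather than on sharpness of the inequality.
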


\begin{proof}
By proposition 4.4, we have $[P_1:P_2]=[Q_1:Q_2]=2$ and hence $[P_1\vee Q_1:P_2\vee Q_2]=4$. 
For any positive element $T\in B(H)^{\phi^2\psi^2}$, since 
$F(T)\geq \frac{1}{4}T$, 
by the Pimsner-Popa theorem \cite{PP,Po} 
we have $[B(H)^{\phi^2\psi^2}:B(H)^{\phi}\cap B(H)^{\psi}]\leq 4$. 
On the other hand, for any positive element $x\in \langle 
M,e_{2}
\rangle\subseteq  B(H)^{\phi^2\psi^2}$, we have 
$
F(x)\geq\dfrac{1}{[B(H)^{\phi^2\psi^2}:B(H)^{\phi}\cap B(H)^{\psi}]}x
$. Since $[\langle 
M,e_{2}
\rangle:\langle 
M,e_{2}
\rangle]=[P_1\vee Q_1:P_2\vee Q_2]=4$, we have 
$\dfrac{1}{4}\geq \dfrac{1}{[B(H)^{\phi^2\psi^2}:B(H)^{\phi}\cap B(H)^{\psi}]}$ 
and hence $[B(H)^{\phi^2\psi^2}:B(H)^{\phi}\cap B(H)^{\psi}]\geq 4$. 
Therefore we have $[B(H)^{\phi^2\psi^2}:B(H)^{\phi}\cap B(H)^{\psi}]=4$.
\end{proof}

\begin{cor} 
$B(H)^{\phi}\cap B(H)^{\psi}\not=
B(H)^{\phi\psi}$.
\end{cor}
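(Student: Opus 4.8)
The plan is to derive a contradiction from the hypothetical equality $B(H)^{\phi}\cap B(H)^{\psi}=B(H)^{\phi\psi}$ by comparing Jones indices along the tower of inclusions that has already been set up. First I would record the chain
$$
B(H)^{\phi}\cap B(H)^{\psi}\subseteq B(H)^{\phi\psi}\subseteq B(H)^{\phi^2}\cap B(H)^{\psi^2}=B(H)^{\phi^2\psi^2},
$$
where the last equality is the one noted just before Lemma 4.9 (via Proposition 3.4 and Corollary 3.6). By Proposition 4.11 the index of the composite inclusion $B(H)^{\phi}\cap B(H)^{\psi}\subseteq B(H)^{\phi^2\psi^2}$ equals $4$. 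By Lemma 4.9 these inclusions are isomorphic, as operator systems, to genuine inclusions of von Neumann algebras, and by Lemma 4.10 the big inclusion is irreducible; hence an irreducible finite-index subfactor carries a unique normal conditional expectation, and indices multiply along the tower. So if I can show the intermediate index $[B(H)^{\phi\psi}:B(H)^{\phi}\cap B(H)^{\psi}]$ is strictly greater than $1$, then the equality is impossible.

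The cleanest way to see that the intermediate step is proper is to push the maximal $*$-subalgebra description down. By Lemma 2.2, $\langle M,e_P\rangle$ is the maximal $*$-subalgebra of $B(H)^{\phi}$ for the appropriate generating set $P$; applied to $\phi\psi\in G$ (whose generators generate $P_1\vee Q_1$, as in Proposition 3.4 / the discussion preceding Lemma 4.9) this gives that $\langle M,e_1\rangle$ is the maximal $*$-subalgebra of $B(H)^{\phi\psi}$, while $\langle M,e_1\rangle$ is likewise maximal inside $B(H)^{\phi}\cap B(H)^{\psi}=B(H)^{\frac12(\phi+\psi)}$. Therefore if $B(H)^{\phi}\cap B(H)^{\psi}=B(H)^{\phi\psi}$ held, both would have the same maximal $*$-subalgebra, which is consistent, so I cannot get the contradiction from maximality alone — instead I would argue at the level of $e_2$-corners or simply invoke index multiplicativity: $[B(H)^{\phi^2\psi^2}:B(H)^{\phi\psi}]$ must itself be $\geq 2$ (it dominates $[\langle M,e_2\rangle:\langle M,e_1\rangle]=4$ divided by whatever the lower step is, or one computes it directly from the conditional expectation $\frac12(\mathrm{id}+\phi\psi)$ which satisfies $F(T)\geq\frac12 T$, giving index $\leq 2$, and it is $\neq 1$ because $P_2\vee Q_2\neq P_1\vee Q_1$). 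Then $4=[B(H)^{\phi^2\psi^2}:B(H)^{\phi}\cap B(H)^{\psi}]=[B(H)^{\phi^2\psi^2}:B(H)^{\phi\psi}]\cdot[B(H)^{\phi\psi}:B(H)^{\phi}\cap B(H)^{\psi}]$ forces $[B(H)^{\phi\psi}:B(H)^{\phi}\cap B(H)^{\psi}]=2\neq 1$, contradicting the assumed equality.

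Concretely, the steps in order are: (1) invoke Lemma 4.9 and Lemma 4.10 to put ourselves in the world of honest von Neumann algebra inclusions with unique conditional expectations and multiplicative index; (2) produce the conditional expectation $G(T)=\frac12(T+\phi\psi(T))$ from $B(H)^{\phi^2\psi^2}$ onto $B(H)^{\phi\psi}$ and note $G(T)\geq\frac12 T$, so by Pimsner--Popa \cite{PP,Po} the index $[B(H)^{\phi^2\psi^2}:B(H)^{\phi\psi}]$ is $1$ or $2$; (3) show it is not $1$: if it were, then by the maximal-$*$-subalgebra statement (Lemma 2.2) applied to $\phi\psi$ and to $(\phi\psi)^2=\phi^2\psi^2$, we would get $\langle M,e_1\rangle=\langle M,e_2\rangle$, hence $P_1\vee Q_1=P_2\vee Q_2$, contradicting $P_2\neq P_1$ (this is exactly the argument of Proposition 4.4); so the index is $2$; (4) combine with Proposition 4.11 and index multiplicativity to get $[B(H)^{\phi\psi}:B(H)^{\phi}\cap B(H)^{\psi}]=2$, which in particular is nonzero in the proper sense, so $B(H)^{\phi}\cap B(H)^{\psi}\subsetneq B(H)^{\phi\psi}$.

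The main obstacle I anticipate is step (3) — rigorously ruling out that the intermediate inclusion is trivial. The subtlety is that $B(H)^{\phi}\cap B(H)^{\psi}$ is not literally $B(H)^{\chi}$ for a single $\chi\in G$ unless one first identifies it with $B(H)^{\frac12(\phi+\psi)}$ via Proposition 3.4, and then one must check that the von Neumann algebra $P$ attached to $\frac12(\phi+\psi)$ is exactly $P_1\vee Q_1$ (generated by $\{\sqrt{1/2}\,z_n\}\cup\{\sqrt{1/2}\,w_m\}$, hence by $\{z_n\}\cup\{w_m\}$) so that its maximal $*$-subalgebra inside $B(H)^{\phi}\cap B(H)^{\psi}$ is $\langle M,e_1\rangle$; only then does the equality $B(H)^{\phi}\cap B(H)^{\psi}=B(H)^{\phi\psi}$ force $\langle M,e_1\rangle$ (max $*$-subalgebra of the left side) to coincide with $\langle M,e_1\rangle$ — which is automatic — so in fact the contradiction has to come from the $e_2$-level, i.e.\ from the genuine index computation in (2)--(4) rather than from maximality. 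I would therefore lean on the index arithmetic as the load-bearing argument and use the maximal-$*$-subalgebra observation only as the mechanism showing index $=2$ rather than $=1$, mirroring the proof of Proposition 4.4 verbatim.
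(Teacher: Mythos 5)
Your proposal is correct and its load-bearing mechanism is exactly the paper's: the conditional expectation $F_0=\tfrac12(\mathrm{id}+\phi\psi)$ satisfies $F_0(T)\geq\tfrac12 T$, so by Pimsner--Popa $[B(H)^{\phi^2\psi^2}:B(H)^{\phi\psi}]\leq 2$, which is incompatible with $[B(H)^{\phi^2\psi^2}:B(H)^{\phi}\cap B(H)^{\psi}]=4$ from Proposition 4.11 if the two algebras coincided. The paper stops there --- your additional steps (pinning the upper index down to exactly $2$, invoking multiplicativity, and the maximal-$*$-subalgebra detour, where one would anyway have to worry that the algebra generated by $\{z_nw_m\}$ need not be all of $P_1\vee Q_1$) are not needed, since assuming equality directly forces $4\leq 2$.
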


\begin{proof}
Let $F_0(T)=\frac{1}{2}(T+\phi\psi(T))$. Then $F_0$ is a normal conditional 
expectation from $B(H)^{\phi^2\psi^2}$ onto $B(H)^{\phi\psi}$. Then for any 
positive $T\in B(H)^{\phi^2\psi^2}$, we have 
$
F_0(T)\geq\frac{1}{2}T
$ and hence $[B(H)^{\phi^2\psi^2}:B(H)^{\phi\psi}]\leq2$. Since 
 $B(H)^{\phi}\cap B(H)^{\psi}\subseteq 
B(H)^{\phi\psi}\subseteq 
B(H)^{\phi^2\psi^2}$ and $[B(H)^{\phi^2\psi^2}:B(H)^{\phi}\cap B(H)^{\psi}]=4$, 
we conclude $B(H)^{\phi}\cap B(H)^{\psi}\not=
B(H)^{\phi\psi}$. 
\end{proof}

\begin{ex} 
Let ${\Bbb F}_2$ be a free group with two generators $a$ and $b$. Let 
$A$ be a subgroup generated by $ab$ and $ab^{-1}$. 
Then 
$A$ consists of reduced words 
with even length. Set $M=L({\Bbb F}_2)$, $H=L^2({\Bbb F}_2)$ and 
$\phi=\frac{1}{4}({\rm Ad}JaJ+{\rm Ad}JbJ+{\rm Ad}Ja^{-1}J+{\rm Ad}Jb^{-1}J)
\in G$. In this case, $P_1=L({\Bbb F}_2)=M$ and $P_2=L(A)$. Since 
$[{\Bbb F}_2:A]=2$, we have $[P_1:P_2]=2$ and ${P_2}'\cap P_1={\Bbb C}$. 
We have the following inclusions.  
$$
\begin{matrix}
P_{2}&\subseteq& P_1&= &M&
\subseteq
&\langle 
M,e_{2}
\rangle\\
 & & & &\cap& &\cap\\
 & & & & 
B(H)^{\phi}&\subseteq &B(H)^{\phi^{2}}
\end{matrix}
$$ 
Then the subfactor $B(H)^{\phi}\subseteq B(H)^{\phi^{2}}$ is AFD type III 
with index $2$. The subfactors $M
\subseteq
\langle 
M,e_{2}
\rangle$ and  $B(H)^{\phi}\subseteq B(H)^{\phi^{2}}$ have a common 
Pimsner-Popa basis. Moreover two inclusions $M\subset B(H)^{\phi}$ and 
$\langle 
M,e_{2}
\rangle\subset B(H)^{\phi^{2}}$ have the rigidity property in the sence of 
theorem 3.3. 

Next we consider two unital normal completely positive 
maps $\phi\otimes id$ and $id\otimes\phi$ on $B(H\otimes H)$. 
Then by corollary 4.12, we have 
$
B(H\otimes H)^{\phi\otimes id}\cap B(H\otimes H)^{id\otimes\phi}
\not=B(H\otimes H)^{\phi\otimes \phi}
$. 
\end{ex}

\end{document}